
\documentclass[hidelinks,12pt]{article}

\usepackage[margin=1in]{geometry}  

\usepackage{BOONDOX-cal}
\usepackage{amsmath}               
\usepackage{amsfonts}              
\usepackage{amsthm}                
\usepackage{amssymb}
\usepackage{titlesec}
\usepackage{verbatim}
\usepackage{hyperref}
\usepackage{theoremref}

\usepackage{bm}


\newtheoremstyle{break}
  {\topsep}{\topsep}%
  {\itshape}{}%
  {\bfseries}{}%
  {\newline}{}%
\theoremstyle{break}
\newtheorem{thm}{Theorem}

\theoremstyle{definition}
\newtheorem{lemma}[thm]{Lemma}

\theoremstyle{definition}
\newtheorem{corollary}[thm]{Corollary}

\theoremstyle{definition}
\newtheorem{remark}{Remark}

\newtheorem{mydef}{Definition}

\newcommand{\RR}{\mathbb{R}}      
\newcommand{\ZZ}{\mathbb{Z}}      
\newcommand{\CC}{\mathbb{C}}
\newcommand{\D}{\text{d}}
\newcommand{\IFF}{if and only if }
\usepackage{physics}

\numberwithin{equation}{section}

\begin{document}

\title{\textbf{On the commutation properties of finite convolution and differential operators I: commutation}}

\author{Yury Grabovsky, \qquad Narek Hovsepyan}

\date{}
\maketitle

\begin{abstract}
  The commutation relation $KL = LK$ between finite convolution integral
  operator $K$ and differential operator $L$ has implications for spectral
  properties of $K$. We characterize all operators $K$ admitting this
  commutation relation. Our analysis places no symmetry constraints on the
  kernel of $K$ extending the well-known results of Morrison for real
  self-adjoint finite convolution integral operators.
\end{abstract}

\tableofcontents

\section{Introduction}

The need to understand spectral properties of finite convolution integral operators

\begin{equation} \label{K}
(Ku)(x) = \int_{-1}^1 k(x-y) u(y) \D y 
\end{equation}

\noindent acting on $L^2(-1,1)$ arises in a number of applications, including
optics \cite{frieden}, radio astronomy \cite{bracewell riddle},
\cite{bracewell wernecke}, electron microscopy \cite{gechberg}, x-ray
tomography \cite{grunbaum2}, \cite{tam}, noise theory \cite{davenport root}
and medical imaging \cite{al-aifari katsevich}, \cite{katsevich1},
\cite{katsevich2}, \cite{katsevich tovbis}. In some cases it is possible to
find a differential operator $L$ which commutes with $K$ (cf. \cite{pollak
  slepian,morrison,widom,katsevich1}),

\begin{equation} \label{C3}
KL = L K. \tag{C1}
\end{equation}

\noindent In this case eigenfunctions of $K$ can be chosen to be solutions of
ordinary differential equations. More precisely, \eqref{C3} implies that
eigenspaces $E_\lambda$ of $K$ are invariant under $L$, i.e. $L: E_\lambda
\mapsto E_\lambda$. Now if $L$ is diagonalizable, e.g. self-adjoint, or more
generally, normal (for characterization of normality see Remark~\ref{normal}),
then one can choose a basis for $E_\lambda$ consisting of eigenfunctions of
$L$. This permits to bring the vast literature on asymptotic properties of
solutions of ordinary differential equations to bear on obtaining analytical
information about the asymptotics of eigenvalues and eigenfunctions of
integral operators. With this said, we will not be investigating spectral
properties of differential operators that commute with integral operators. In
our view questions about differential operators are much more tractable than
questions about the integral operators, see e.g. \cite{zettl}, and our goal is
to find all connections between the two questions.

The most famous example of this phenomenon is the band-and time limited
operator of Landau, Pollak, and Slepian \cite{landau pollak 1}, \cite{landau
  pollak 2}, \cite{pollak slepian}--\cite{slepian2}, corresponding to $k(z) =
\frac{\sin (a z)}{z}$ in \eqref{K} with $a > 0$. Sharp estimates for
asymptotics of the eigenvalues of $K$ were derived using its commutation with
a second order symmetric differential operator, whose eigenfunctions are the
well-known prolate spheroidal wave functions that first appeared in the
context of quantum mechanics \cite{Morse53}. Another example is the
result of Widom \cite{widom}, where using comparison with special operators
that commute with differential operators, the author obtained asymptotic
behavior of the eigenvalues of a large class of integral operators with real-valued even kernels. A complete characterization of such special operators commuting with symmetric second order differential operators was achieved by Morrison \cite{morrison} (see also \cite{wright}, \cite{grunbaum}). We are interested in the possibility of extension of these ideas to the case of complex-valued $k(z)$. In this more general context the property of commutation must also be generalized, so as to permit the characterization of eigenfunctions as solutions of an eigenvalue problem for a second or fourth order differential operator. 

In this paper we analyze the commutation relation \eqref{C3}, under the
assumption that $k$ is analytic at the origin as in \cite{morrison},
\cite{wright}, or it has a simple\footnote{It is not hard to show that
  commutation is not possible for higher order poles.} pole at $0$, in which case the integral is
understood in the principal value sense (cf. Theorem~\ref{THM
  commutation}). Further, we consider extensions of the notion of commutation,
that also link integral equations with ordinary differential equations. A
natural extension of commutation, as explained in the introductory section in
\cite{aifariPhD} is 

\begin{equation} \label{C2}
\begin{cases}
K L_1 = L_2 K
\\
L_j^* = L_j, \qquad j=1,2
\end{cases},
\tag{C2}
\end{equation}

\noindent where $L_j, \ j=1,2$ are differential operators with complex coefficients. This has implications for singular value decomposition of $K$. It is easy to check that \eqref{C2} reduces to a commutation relation for $K^* K$, indeed we have

\begin{equation} \label{L1 K* K = K* K L1}
L_1 K^*K = K^* K L_1 ,
\end{equation}

\noindent and therefore singular functions of $K$ satisfy ODEs, in the sense
explained above. In fact, commuting pairs $(K, L)$ can also provide instances
of \eqref{C2}, as was observed in \cite{al-aifari katsevich},
\cite{katsevich1}, \cite{katsevich2}, \cite{katsevich tovbis} in applications
to truncated Hilbert transform operators ($k(z) = 1/z$). In this setting the
input function is considered on one interval while the output of $K$ is
defined on a different interval. Commutation relation of type \eqref{C2} is
obtained from \eqref{C3} by restricting the differential operator to corresponding intervals. Their method requires that $L$ has
real valued coefficients, while such constraint is not necessary to pass from
\eqref{C2} to \eqref{L1 K* K = K* K L1}. As a consequence a singular value
decomposition can be obtained for complex-valued operators $K$, as well.

 When $k(z)$ has a simple pole at the origin, the operator $K$ is not compact
 and may have a continuous spectrum (cf. \cite{koppelman
   pincus}). However, when we consider situations where the output of $K$ is
 defined on some other
 line segment in the complex plane, as in the examples of truncated Hilbert
 transform operators mentioned above, we uncover a rich set of operators $K$, such
 that $K^* K$ has discrete spectrum and singular value decomposition for $K$
 can be obtained following the ideas of  \cite{al-aifari katsevich},
 \cite{katsevich1}, \cite{katsevich2}, \cite{katsevich tovbis}. As an example of
 application of some of our results, the operator with kernel $k(z) = 1 / \sin
 \left( \frac{\pi}{8}z\right)$ considered from $L^2(-1,1) \to L^2(3,5)$ has a
 discrete singular value
decomposition
(see Remark~\ref{REM discrete spectrum} for details and more examples).

In the second part of this work \cite{gr hov sesqui} we consider a new kind of
commutation relation $\overline{K} L_1 = L_2 K$, with $L_j^T = L_j$ for
$j=1,2$, which we call \emph{sesquicommutation}. In this case the eigenspaces
of $K^* K$ will be shown to be invariant under the fourth order self-adjoint
operator $L_1^* L_1$. In the case of sesquicommutation we will be able to
prove that no nontrivial cases arise unless $L_{1}=L_{2}$. This reduction does
not work for \eqref{C2}, and hence only the case \eqref{C3} will be fully
analyzed here.

\section{Preliminaries}

We assume that $z k(z) \in L^2((-2,2), \CC)$ is analytic in a neighborhood of
$0$. This includes two cases: regular, when $k$ is analytic at $0$, and
singular, when $k$ has a simple pole at $0$, in which case the integral is
understood in the principal value sense. Further, assume that $L, L_j$ are second order differential operators:

\begin{equation} \label{L}
\begin{cases}
L u = \mathcal{a} u'' + \mathcal{b} u' + \mathcal{c} u,
\\
\mathcal{a}(\pm 1) = 0, \ \mathcal{b}(\pm 1) = \mathcal{a}'(\pm 1) ,
\end{cases}
\end{equation}

\noindent where the indicated boundary conditions are necessary for the above
commutation relations to hold. These are also necessary for the adjoint
operator to be a differential operator as well. Thus various classes of
operators, such as self-adjoint, symmetric or normal can be described by specifying additional constraints on the coefficients of $L$, always assuming that the boundary conditions in \eqref{L} hold.

When $k$ is smooth in $[-2,2]$, formulating commutation relations \eqref{C3}
and \eqref{C2} in terms of the kernel $k(z)$ and the coefficients of $L$ is a
matter of integration by parts, which
due to the imposed boundary conditions lead, respectively, to

\begin{equation}\label{R3}
\begin{split}
[\mathcal{a}(y+z)-\mathcal{a}(y)] k''(z) + [2\mathcal{a}'(y) + \mathcal{b}(y+z) - \mathcal{b}(y)] k'(z) +& \\
+[\mathcal{c}(y+z) - \mathcal{c}(y) + \mathcal{b}'(y) - \mathcal{a}''(y)] k(z) &=0,
\end{split}
\tag{R1}
\end{equation}

\begin{equation}\label{R2}
\begin{split}
[\mathcal{a}_2(y+z)-\mathcal{a}_1(y)] k''(z) + [2\mathcal{a}_1'(y) + \mathcal{b}_2(y+z) - \mathcal{b}_1(y)] k'(z) +& \\
+[\mathcal{c}_2(y+z) - \mathcal{c}_1(y) + \mathcal{b}_1'(y) - \mathcal{a}_1''(y)] k(z) &=0,
\end{split}
\tag{R2}
\end{equation}

\noindent where $\mathcal{a}_j, \mathcal{b}_j, \mathcal{c}_j$ denote the
coefficients of $L_j$ for $j=1,2$. Less obviously (see Remark~\ref{REM k has a
  pole}), the same relation \eqref{R3} holds if $k$ has a simple pole at $0$.

The main idea of the proofs is to analyze these relations by taking sufficient
number of derivatives in $z$ and evaluating the result at $z=0$. This allows
one to find linear differential relations between the coefficients of the
differential operators, narrowing down the set of possibilities to families of
functions depending on finitely many parameters. Returning to the original
relations \eqref{R3}, \eqref{R2} we obtain necessary and sufficient conditions
for commutation that can be completely analyzed, resulting in the explicit
listing of all pairs $(k,L)$ satisfying \eqref{R3}.

\begin{remark}
The complete analysis of \eqref{C2} beyond the instances generated by \eqref{C3}, can also be achieved by our approach, but will require substantially more work. We remark that in this case too it can be shown that either $k$ is trivial or the coefficients of $L_1$ and $L_2$ are linear combinations of polynomials multiplied by exponentials.     
\end{remark}

\section{Main Results}

\begin{mydef} \label{trivial DEF}

We will say that $k$ (or operator $K$) is \textit{trivial}, if it is a finite linear combination of exponentials $e^{\alpha z}$ or has the form  $e^{\alpha z} p(z)$, where $p(z)$ is a polynomial. Note that in this case $K$ is a finite-rank operator.

\end{mydef}

\begin{remark} \label{REM multiplier}
When $K$ commutes with $L$, then $M K M^{-1}$ commutes with $M L M^{-1}$. If $M$ is the multiplication operator by $z \mapsto e^{\tau z}$, then $M K M^{-1}$ is a finite convolution operator with kernel $k(z) e^{\tau z}$ (where $k$ is the kernel of $K$) and $M L M^{-1}$ is a second order differential operator with the same leading coefficient as $L$. With this observation the results of Theorem~\ref{THM commutation} are stated up to multiplication of $k$ by $e^{\tau z}$, i.e. we chose a convenient constant $\tau$ in order to more concisely state the results.  Moreover, one can add any complex constant to $\mathcal{c}(y)$ (cf. \eqref{L}), which corresponds to adding a multiple of identity to $L$ and hence the commutation still holds.
\end{remark}

\noindent In theorem below all parameters are complex, unless specified otherwise. 

\begin{thm}[Commutation \eqref{C3}] \label{THM commutation}
Let $K, L$ be given by \eqref{K} and \eqref{L} with $\mathcal{a}, \mathcal{b}, \mathcal{c}$ smooth in $[-2,2]$. Assume $k$ is smooth in $[-2,2] \backslash \{0\}$ and either it

\begin{enumerate}
\item[(i)] is analytic at $0$, not identically zero near $0$ and is nontrivial in the sense of Definition~\ref{trivial DEF}.

\item[(ii)] has a simple pole at $0$. 
\end{enumerate}

\noindent If \eqref{R3} holds, then (in case $\lambda$ or $\mu = 0$ appropriate limits must be taken)

\begin{equation} \label{k general}
 k(z) = \frac{\lambda }{\sinh \left( \frac{\lambda}{2} z \right)} \left( \alpha_1 \frac{\sinh(\mu z)}{\mu} + \alpha_2 \cosh(\mu z) \right)
\end{equation}

\begin{equation} \label{a,b,c general}
\begin{cases}
\mathcal{a}(y) = \frac{1}{\lambda^2} \left[ \cosh(\lambda y) - \cosh \lambda \right]
\\
\mathcal{b}(y)= \mathcal{a}'(y)
\\
\mathcal{c}(y)= \left( \tfrac{\lambda^2}{4} - \mu^2 \right) \mathcal{a}(y)
\end{cases}
\end{equation}

\noindent For some special choices of parameters, the differential operator commuting with $K$ is more general than the one given by \eqref{a,b,c general}. Below we list all such cases:

\begin{enumerate}

\item $\alpha_1 = 0, \ \lambda = \pi i, \ \mu = \frac{2m+1}{4} \lambda$ with $m \in \ZZ$:

\begin{equation*}
k(z) = \frac{\cos \left( \tfrac{\pi (2m+1)}{4}z \right)} { \sin \left( \tfrac{\pi}{2}z \right)}
\qquad \text{and} \qquad
\begin{cases}
\mathcal{a}(y) = \alpha \left( e^{\pi i y} - e^{\pi i} \right) + \beta \left( e^{-\pi i y} - e^{-\pi i} \right)
\\
\mathcal{b}(y)= \mathcal{a}'(y)
\\
\mathcal{c}(y)= \frac{\pi^2}{4} \left[ \frac{(2m+1)^2}{4} - 1 \right] \mathcal{a}(y)
\end{cases}
\end{equation*}

\noindent When $\alpha = \beta$ \eqref{a,b,c general} is recovered.

\item $\alpha_1 = \mu =0$, then with $\mathcal{a}_0(y)=\cosh(\lambda y) - \cosh\lambda$:

\begin{equation*}
k(z) = \frac{1}{\sinh \left( \tfrac{\lambda}{2} z \right)}
\qquad \text{and} \qquad
\begin{cases}
\mathcal{a} (y) = \alpha \mathcal{a}_0(y)
\\
\mathcal{b}(y)=\alpha \mathcal{a}_0'(y) + \beta \mathcal{a}_0(y)
\\
\mathcal{c}(y)= \frac{\beta}{2} \mathcal{a}_0'(y) + \alpha \frac{\lambda^2}{4} \mathcal{a}_0(y)
\end{cases}
\end{equation*}

\noindent When $\beta = 0$ \eqref{a,b,c general} is recovered.

\item $\mu = \lambda = 0$, then with $\mathcal{p}(y)$ an arbitrary polynomial of order at most two such that $\mathcal{p}'(0)=0$:

\begin{equation*}
k(z) = \frac{1}{\beta} + \frac{1}{z}
\qquad \text{and} \qquad
\begin{cases}
\mathcal{a}(y)=(y^2-1)\mathcal{p}(y) 
\\
\mathcal{b}(y)=\mathcal{a}'(y) + \beta y \mathcal{p}'(y) - \beta \mathcal{p}''(y)
\\
\mathcal{c}(y)=\beta \mathcal{p}'(y)
\end{cases}
\end{equation*}

\noindent When $\mathcal{p}(y) \equiv 1$ \eqref{a,b,c general} is recovered.

\item $\mu = \lambda = \alpha_1 = 0$, then with $\mathcal{p}(y)$ an arbitrary polynomial of order at most two:

\begin{equation*}
k(z) = \frac{1}{z}
\qquad \text{and} \qquad
\begin{cases}
\mathcal{a}(y)=(y^2-1)\mathcal{p}(y) 
\\
\mathcal{b}(y)=a'(y) + \beta (y^2-1)
\\
\mathcal{c}(y)=y \mathcal{p}'(y) + \beta y
\end{cases}
\end{equation*}

\noindent When $\mathcal{p}(y) \equiv 1$ and $\beta = 0$ \eqref{a,b,c general} is recovered.

\end{enumerate}

\end{thm}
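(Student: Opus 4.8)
The plan is to treat the functional equation \eqref{R3} as the fundamental object and extract from it, step by step, a system of ordinary differential equations relating the coefficients $\mathcal{a},\mathcal{b},\mathcal{c}$ to the kernel $k$. The key structural feature of \eqref{R3} is that the $z$-dependence enters in two ways: through the shifted arguments $y+z$ of the coefficients, and through the explicit factors $k(z),k'(z),k''(z)$. Following the stated strategy, I would differentiate \eqref{R3} repeatedly in $z$ and then set $z=0$. At $z=0$ the shift disappears, so each such evaluation produces a linear relation among $\mathcal{a},\mathcal{b},\mathcal{c}$ and their derivatives at $y$, with coefficients built from the Taylor data $k(0),k'(0),\dots$ (or, in the pole case, from the Laurent data after accounting for the principal-value subtlety flagged in Remark~\ref{REM k has a pole}). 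The goal of this first stage is to pin down $\mathcal{a},\mathcal{b},\mathcal{c}$ as solutions of a finite-order linear ODE system, which forces them into the finite-parameter families of exponentials/polynomials-times-exponentials anticipated in the theorem. The boundary conditions $\mathcal{a}(\pm1)=0$, $\mathcal{b}(\pm1)=\mathcal{a}'(\pm1)$ from \eqref{L} then cut the parameter count down further and are exactly what produce the specific normalization $\cosh(\lambda y)-\cosh\lambda$ in \eqref{a,b,c general}.

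Once the coefficients are constrained to this finite-dimensional family, I would substitute the resulting forms for $\mathcal{a},\mathcal{b},\mathcal{c}$ back into the \emph{full} relation \eqref{R3}, not just its jets at $z=0$. Because $\mathcal{a},\mathcal{b},\mathcal{c}$ are now explicit combinations of $e^{\pm\lambda y}$, the combinations $\mathcal{a}(y+z)-\mathcal{a}(y)$ etc.\ factor as (function of $y$) times (function of $z$), so \eqref{R3} separates variables: the $y$-dependence must cancel identically, leaving a second-order linear ODE in $z$ for $k(z)$ with constant coefficients determined by $\lambda$ and $\mu$. Solving that ODE gives the hyperbolic/trigonometric ansatz \eqref{k general}, with $\lambda/\sinh(\tfrac{\lambda}{2}z)$ arising from the $\mathcal{a}$-difference structure and the $\alpha_1\sinh(\mu z)/\mu+\alpha_2\cosh(\mu z)$ factor from the homogeneous solutions. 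The normalizations of $\lambda$ in \eqref{a,b,c general} and the relation $\mathcal{c}=(\tfrac{\lambda^2}{4}-\mu^2)\mathcal{a}$ should fall out of matching the separation constants. To handle case (ii), I would either carry the Laurent expansion through the same differentiation procedure or, more cleanly, invoke the equivalence asserted after \eqref{R2} that \eqref{R3} holds verbatim for a simple pole, so the pole case is not analyzed separately but rather as the $k\sim1/z$ specialization of the same computation.

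The bulk of the theorem, however, is the enumeration of the four exceptional cases, and this is where I expect the real work to lie. The generic derivation implicitly assumes that the characteristic data ($\lambda\neq0$, $\mu\neq0$, $\alpha_1\neq0$, and the various exponentials being distinct) are in ``general position,'' so that the linear systems obtained by jet-evaluation have maximal rank and the separation-of-variables step has no degeneracies. Each exceptional family corresponds to a coincidence that lowers this rank: $\lambda=\pi i$ with $\mu=\tfrac{2m+1}{4}\lambda$ (resonance making extra exponential modes available for $\mathcal{a}$), $\mu=0$ (a repeated root forcing a polynomial factor and the extra $\beta$-terms), and $\lambda=\mu=0$ (full degeneration to the pure Hilbert-transform kernel $1/z$, where $\mathcal{a},\mathcal{b},\mathcal{c}$ become polynomials and an arbitrary quadratic $\mathcal{p}$ survives). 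So after the generic analysis I would go back and, for each degenerate locus in parameter space, redo the jet computation keeping the terms that vanished generically; the extra null directions in the linear system are precisely the extra free parameters $\alpha,\beta,\mathcal{p}$ listed. The main obstacle is bookkeeping: ensuring the case split is exhaustive (no degeneracy is missed) and mutually consistent (the reductions ``when $\alpha=\beta$,'' ``when $\beta=0$,'' ``when $\mathcal{p}\equiv1$'' genuinely recover \eqref{a,b,c general}), and correctly treating the principal-value integration by parts in the pole case so that \eqref{R3} is justified rather than assumed.
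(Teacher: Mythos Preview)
Your high-level strategy matches the paper's: differentiate \eqref{R3} in $z$, evaluate at $z=0$ to pin down $\mathcal{a},\mathcal{b},\mathcal{c}$ as solutions of a constant-coefficient linear ODE, impose the boundary conditions, then feed the resulting forms back into the full \eqref{R3} to obtain a second-order ODE for $k$. For the analytic case this is exactly what the paper does, and your description of the separation of variables and of how $\cosh(\lambda y)-\cosh\lambda$ and the ODE $u''+(\nu-\lambda^2/4)u=0$ emerge is accurate.

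The substantive gap is your treatment of the singular case. You suggest it can be handled ``as the $k\sim 1/z$ specialization of the same computation,'' but this is where the paper spends most of its effort, and the analysis is genuinely different. When $k$ has a simple pole, one must clear denominators (the paper multiplies \eqref{R3} by $z^3$) before differentiating at $z=0$; the resulting jet relations no longer close at third order but produce a fourth- or sixth-order ODE for $\mathcal{a}$, depending on whether $k_3=0$. This yields a much larger zoo of candidate forms for $\mathcal{a}$---sums of up to three exponential pairs, exponentials times linear or quadratic polynomials, polynomials of degree up to six---and a separate block of reduction lemmas is needed to rule out all mixed forms (polynomials-times-exponentials, two inequivalent exponential frequencies, exponentials coexisting with polynomials). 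None of this is visible from the analytic-case computation, and your plan does not account for it.

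Relatedly, your diagnosis of the exceptional families is only partially correct. Item~1 does arise from the boundary-condition degeneracy $e^{\lambda}=e^{-\lambda}$ at $\lambda=\pi i$, as you say. But item~2 comes not from a repeated root in the $k$-ODE; it appears because when $\alpha_1=\mu=0$ there is an additional \emph{first-order} differential operator (with $\mathcal{a}\equiv 0$) commuting with the same $K$, and one takes the linear span. Items~3 and~4 do not arise as limits or rank drops in the exponential analysis at all: they come from the purely polynomial branch of the singular case, where $\mathcal{a}$ is a polynomial whose degree can be $2$, $3$, or $4$ (degree $\ge 5$ is shown to be impossible by further substitution), and the degree-$3$ and degree-$4$ subcases are exactly what produce the extra quadratic $\mathcal{p}(y)$. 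Your sketch would need to incorporate this polynomial-degree case split, together with the reduction lemmas that isolate it, to be complete.
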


\begin{remark} \label{REM lambda in iR}
If $\lambda \in i\RR$, then $k(z)$ may become singular at $z \in [-2,2]\backslash \{0\}$. In order to exclude these cases we need to require either 

\begin{enumerate}
\item[$\bullet$] $|\lambda| < \pi$

\item[$\bullet$] $\pi \leq |\lambda| < 2 \pi$ and $\alpha_1 = 0, \ \mu = \lambda \frac{2m+1}{4}$ for some $m \in \ZZ$
\end{enumerate}
\end{remark}

\begin{remark} \mbox{} \label{REM morrison}
\begin{enumerate} 

\item[(i)] Morrison's result corresponds to the analytic case: $\alpha_2 = 0$ and when $k$ is even and real-valued. According to Remark~\ref{REM multiplier} the general integral operator in the analytic case is similar to Morrison's operator and therefore its spectrum can be determined using Morrison's results.

\item[(ii)] In Theorem \ref{THM commutation} $k$, as well as $L$, can independently be multiplied by arbitrary complex constants, which we sometimes omit to achieve a simpler form of $k$ and $L$.

\end{enumerate}

\end{remark}

\begin{remark}
As we have already mentioned, the connections between the coefficient functions of the differential operators are obtained by differentiating the relation \eqref{R3} appropriate number of times and setting $z=0$. Smoothness of coefficients, analyticity of $k$ at zero (the fact that $k$ is nontrivial and that it doesn't vanish near $0$) are used at this stage, to argue that the differentiation procedure can be terminated at some point and the connections between the coefficient functions will follow. Thus, the original assumptions can be replaced by requiring appropriate degree of smoothness on $k$ and the coefficient functions and that some expression(s) involving $k^{(j)}(0)$ is not zero. This expression can be easily found from our analysis. For example the hypotheses of Theorem~\ref{THM commutation} (case $(i)$) can be replaced by $\mathcal{a},\mathcal{b},\mathcal{c},k \in C^3$ and $k^2(0) k''(0) - k(0) k'(0) \neq 0$ (cf. Section~\ref{Comm analytic SECTION}). Analogous changes can be made in case $(ii)$ of Theorem~\ref{THM commutation}.
\end{remark}

\begin{remark} \label{REM k has a pole}
When $k$ has a pole at zero, the commutation is understood in the principal value sense, namely

\begin{equation*}
\lim_{\epsilon \to 0} \int_{[-1,1] \backslash B_\epsilon (x)} k(x-y) Lu(y) \D y - L \int_{[-1,1] \backslash B_\epsilon (x)} k(x-y) u(y) \D y = 0 .
\end{equation*} 

\noindent After integrating by parts, this can be rewritten as

\begin{equation*}
\lim_{\epsilon \to 0} \int_{[-1,1] \backslash B_\epsilon (x)} F(x,y) u(y) \D y + \Phi(u, x, \epsilon) = 0 ,
\end{equation*}

\noindent where $F(x,y)$ is the left-hand side of \eqref{R3} with $z = x-y$ and

\begin{equation*}
\begin{split}
\Phi(u, x, \epsilon) =& k(\epsilon) \Big\{ \big[ \mathcal{a}(x-\epsilon) - \mathcal{a}(x) \big] u'(x-\epsilon) + \big[ \mathcal{b}(x-\epsilon) - \mathcal{b}(x) - \mathcal{a}'(x-\epsilon)\big] u(x-\epsilon)  \Big\} -
\\
-& k(-\epsilon) \Big\{ \big[ \mathcal{a}(x+\epsilon) - \mathcal{a}(x) \big] u'(x+\epsilon) + \big[ \mathcal{b}(x+\epsilon) - \mathcal{b}(x) - \mathcal{a}'(x+\epsilon) \big] u(x+\epsilon)  \Big\} +
\\
+& k'(\epsilon) u(x-\epsilon) \big[ \mathcal{a}(x-\epsilon) - \mathcal{a}(x) \big] - k'(-\epsilon) u(x+\epsilon) \big[ \mathcal{a}(x+\epsilon) - \mathcal{a}(x) \big] .
\end{split}
\end{equation*}

\noindent Expanding $\Phi(u, x, \epsilon)$ in $\epsilon$ we observe that all terms
up to $O(\epsilon)$ cancel out and hence, $\lim_{\epsilon \to 0} \Phi(u, x,
\epsilon) = 0$. Therefore we conclude $F(x,y) = 0$ for $y \neq x$, resulting
in the same relation \eqref{R3}, as in smooth case.

\end{remark}

\begin{remark} \label{normal}

\noindent As was discussed in the introduction one might want to check whether $L$ (given by \eqref{L}) is normal: $L L^* = L^* L$. Recall that

\begin{equation*}
L^* u = \overline{\mathcal{a}} u'' + (2 \overline{\mathcal{a}}' - \overline{\mathcal{b}}) u' + (\overline{\mathcal{a}}'' - \overline{\mathcal{b}}' + \overline{\mathcal{c}}) u ,
\end{equation*}

\noindent therefore we find

\begin{equation*}
L = L^* \quad \Longleftrightarrow \quad \Im \mathcal{a} = 0, \quad \Re \mathcal{b} = \mathcal{a}' \quad \text{and} \quad \Im \mathcal{c} = \tfrac{1}{2} \Im \mathcal{b}' .
\end{equation*}

\noindent To analyze the normality relation, we first give the conditions for commutation of $L$ with another differential operator $D u = \mathcal{A} u'' + \mathcal{B} u' + \mathcal{C} u$, assuming $\mathcal{a} \neq 0$. One can find that

\begin{equation*}
\begin{split}
LDu &= \mathcal{a} \mathcal{A} u^{(4)} + \left[\mathcal{a}(2\mathcal{A}' + \mathcal{B}) + \mathcal{b} \mathcal{A}\right] u^{(3)} + \left[ \mathcal{a} (\mathcal{A}'' + 2 \mathcal{B}' + \mathcal{C}) + \mathcal{b} (\mathcal{A}' + \mathcal{B}) + \mathcal{c} \mathcal{A} \right] u'' +
\\
&+ \left[ \mathcal{a} (\mathcal{B}'' + 2 \mathcal{C}') + \mathcal{b} (\mathcal{B}' + \mathcal{C}) + \mathcal{c} \mathcal{B} \right] u'
+ \left[ \mathcal{a} \mathcal{C}'' + \mathcal{b} \mathcal{C}' + \mathcal{c} \mathcal{C} \right] u .
\end{split}
\end{equation*}

\noindent Comparing this with an analogous expression for $DLu$ and equating the coefficients of corresponding derivatives of $u$ we obtain that $LD = DL$ is equivalent to

\begin{equation} \label{LD = DL}
\begin{cases}
\mathcal{a} \mathcal{A}' = \mathcal{A} \mathcal{a}'
\\
2 \mathcal{a} \mathcal{B}' + \mathcal{b} \mathcal{A}' =  2 \mathcal{A} \mathcal{b}' + \mathcal{B} \mathcal{a}'
\\
\mathcal{a} \mathcal{B}'' + 2 \mathcal{a} \mathcal{C}' + \mathcal{b} \mathcal{B}' = \mathcal{A} \mathcal{b}'' + 2 \mathcal{A} \mathcal{c}' + \mathcal{B} \mathcal{b}'
\\
\mathcal{a} \mathcal{C}'' + \mathcal{b} \mathcal{C}' = \mathcal{A} \mathcal{c}'' + \mathcal{B} \mathcal{c}'
\end{cases}
\end{equation}

\noindent The first equation of \eqref{LD = DL} implies $\mathcal{A} = \alpha \mathcal{a}$ for some $\alpha \in \CC$. Using this in the second equation of \eqref{LD = DL} we get $\beta \mathcal{a} = (\mathcal{B} - \alpha \mathcal{b})^2$ for some $\beta \in \CC$. The third relation reads

\begin{equation*}
\begin{split}
\mathcal{C}' &= \alpha \mathcal{c}' - \tfrac{1}{2} (\mathcal{B}'' - \alpha \mathcal{b}'') + \frac{\mathcal{B} \mathcal{b}' - \mathcal{b} \mathcal{B}'}{2 \mathcal{a}} =
\\
&= \alpha \mathcal{c}' + \frac{\beta}{2} \frac{\left( \mathcal{b}' - \tfrac{\mathcal{a}''}{2} \right) (\mathcal{B}- \alpha\mathcal{b}) - \left( \mathcal{b} - \tfrac{\mathcal{a}'}{2} \right) (\mathcal{B}'- \alpha\mathcal{b}')}{(\mathcal{B}- \alpha\mathcal{b})^2} ,
\end{split}
\end{equation*} 

\noindent where in the last step we used the identity $2\mathcal{a}(\mathcal{B}''- \alpha\mathcal{b}'') = \mathcal{a}'' (\mathcal{B}- \alpha\mathcal{b}) - \mathcal{a}' (\mathcal{B}'- \alpha\mathcal{b}')$. Integrating, we find $\mathcal{C} = \alpha \mathcal{c} + \frac{1}{2} f + \text{const}$, where 

\begin{equation*}
f = \frac{\beta}{2} \ \frac{2\mathcal{b} - \mathcal{a}'}{\mathcal{B} - \alpha \mathcal{b}} .
\end{equation*}

\noindent When $\mathcal{B} = \alpha \mathcal{b}$, then $\beta = 0$ and by convention we assume $f = 0$. Finally, substituting the expression for $\mathcal{C}$, the fourth equation of \eqref{LD = DL} can be simplified to

\begin{equation*}
2 \beta \mathcal{c}' = (\mathcal{B} - \alpha \mathcal{b}) f'' + \frac{\beta \mathcal{b}}{\mathcal{B} - \alpha \mathcal{b}} f' =  \left[(\mathcal{B} - \alpha \mathcal{b}) f' \right]' + f f' .
\end{equation*}

\noindent Now we integrate the last relation and putting everything together we conclude

\begin{equation*}
LD = DL \quad \Longleftrightarrow \quad
\begin{cases}
\mathcal{A} &= \alpha \mathcal{a},
\\
\beta \mathcal{a} &= (\mathcal{B} - \alpha \mathcal{b})^2,
\\
\mathcal{C} &= \alpha \mathcal{c} + \frac{1}{2} f + \text{const},
\\
2 \beta \mathcal{c} &= (\mathcal{B} - \alpha \mathcal{b}) f' + \frac{1}{2} f^2 + \text{const}.
\end{cases}
\end{equation*}

Write $L = L_0 + L_1$, where $2L_0 = L + L_*$ is self-adjoint and $2L_1 = L - L_*$ is skew-adjoint. Clearly $L$ is normal, \IFF $L_0$ commutes with $L_1$. The coefficient of $\frac{\D^2}{\D x^2}$ in $L_0$ is $\Re \mathcal{a}$ and in $L_1$ is $i \Im \mathcal{a}$. The first equation for commutation of $L_0, L_1$ implies $\Im \mathcal{a} = \alpha \Re \mathcal{a}$ for some $\alpha \in \RR$. W.l.o.g. we may take $\alpha = 0$. Indeed, $L$ is normal \IFF $\tilde{L}=(1-i\alpha) L$ is normal. Now the coefficient of $\frac{\D^2}{\D x^2}$ in $\tilde{L}_1$ is $\frac{1}{2} [(1-i\alpha) \mathcal{a} - (1+i\alpha) \overline{\mathcal{a}}] = 0$. Thus, w.l.o.g. $L=L_0 + L_1$ where $L_0$ is a second order self-adjoint operator and $L_1$ is of first order and skew-adjoint.  Simplifying commutation relations for $L_0, L_1$ we find

\begin{equation*}
\begin{split}
L L^* = L^* L \quad &\text{and} \quad L \neq L^*, \qquad \text{iff}
\\[.2in]
\begin{cases}
L = L_0 + \gamma L_1, \quad \gamma \in \RR \backslash \{0\}, \\
L_0u = \mathcal{a} u''+ \mathcal{b}_0 u' + \mathcal{c}_0 u, \\
L_1u = \mathcal{b}_1u' + \mathcal{c}_1 u,
\end{cases}
\quad &\text{and} \quad
\begin{cases}
\mathcal{a} \in \RR \quad \text{and w.l.o.g.} \ \mathcal{a}>0,
\\
\mathcal{b}_1 = \sqrt{\mathcal{a}},
\\[.1in]
\mathcal{c}_1 = \dfrac{2 \mathcal{b}_0 - \mathcal{a}'}{\sqrt{\mathcal{a}}} + i\RR,
\\
\Re \mathcal{b}_0 = \mathcal{a}',
\\
4\mathcal{c}_0 = 2 \mathcal{b}_0' - \mathcal{a}'' + \dfrac{(\mathcal{a}'-2 \mathcal{b}_0)(3\mathcal{a}' - 2\mathcal{b}_0)}{2\mathcal{a}} + \RR.
\end{cases}
\end{split}
\end{equation*}

\noindent The listed conditions in particular imply that $L_0$ is self adjoint and $L_1$ is skew-adjoint. 
\end{remark}

\vspace{.2in}

Theorem~\ref{THM commutation} characterizes solutions of the commutation relation $K L u= L K u$, where $u$ is a smooth function on $[-1,1]$. Up to this point we were assuming that $K: L^2(-1,1) \mapsto L^2(-1,1)$, but following \cite{al-aifari katsevich}, \cite{katsevich1}, \cite{katsevich2}, \cite{katsevich tovbis} we can consider $K$ as an operator $K : L^2(-1,1) \mapsto L^2(a,b)$ by restricting the variable $x$ in $(Ku)(x)$ to $(a,b)$, where $(a,b)$ is the line segment connecting $a$ to $b$ in the complex plane. Now let $L_2 : = L_{(a,b)}$ denote the operator $L$ acting on (and returning) functions defined on the line segment $(a,b)$ and similarly $L_1 := L_{(-1,1)}$. If both $L_1$ and $L_2$ are self-adjoint (in particular we need the coefficient of $\frac{\D^2}{\D y^2}$ in $L$ to vanish at $\pm 1, a$ and $b$) we get an example of commutation \eqref{C2}: $K L_1 u = L_2 K u$, where $u$ is a smooth function on $[a,b]$. Below we present all such instances that can be deduced from the commutation relation $KL = LK$ (the results are given up to multiplication of $k(z)$ by $e^{\tau z}$, cf Remark~\ref{REM adding tau in examples} below).

\begin{corollary} \label{CORO examples}

Let $K : L^2(-1,1) \to L^2(a,b)$ be given by \eqref{K} and $L$ be a differential operator given by \eqref{L}, then the commutation relation

\begin{equation} \label{shifted commutation}
\begin{cases}
K L_{(-1,1)} u = L_{(a,b)} K u  \qquad \qquad u \in C^\infty [-1,1]
\\
L_{(-1,1)}^* = L_{(-1,1)} \quad \text{and} \quad L_{(a,b)}^* = L_{(a,b)}
\end{cases}
\end{equation}

\noindent holds for the following choices of operators $K,L$ and line segments $(a,b)$:

\begin{enumerate}

\item $k$ is given by \eqref{k general}, coefficients of $L$ are given by \eqref{a,b,c general} with

\begin{equation*}
\begin{cases}
\lambda , \mu \in \RR \cup i\RR, \quad \lambda \neq 0
\\
a = -1 + \frac{2\pi i n}{\lambda}, \quad b = 1 + \frac{2\pi i n}{\lambda} \quad, \ n \in \ZZ
\end{cases}
\end{equation*}

\noindent (When $\lambda \in i\RR$ further restrictions of Remark~\ref{REM lambda in iR} must be taken into account)

\item $k(z) = \dfrac{1}{\sinh \left( \tfrac{\lambda}{2} z \right)}$ and with $\mathcal{a}_0(y)=\cosh(\lambda y) - \cosh\lambda$:

\begin{equation*}
\begin{cases}
\mathcal{a} (y) = \alpha \mathcal{a}_0(y)
\\
\mathcal{b}(y)=\alpha \mathcal{a}_0'(y) + \beta \mathcal{a}_0(y)
\\
\mathcal{c}(y)= \frac{\beta}{2} \mathcal{a}_0'(y) + \alpha \frac{\lambda^2}{4} \mathcal{a}_0(y) ,
\end{cases} 
\end{equation*}

where $\beta \in i\RR$, \ $\lambda \in \RR \cup i\RR$, \ $\alpha \in \RR$ and $a = -1 + \frac{2\pi i n}{\lambda}, \quad b = 1 + \frac{2\pi i n}{\lambda}$ with $\ n \in \ZZ$.

\item $k(z) = \displaystyle \frac{1}{\beta} + \frac{1}{z}$ and $L$ has coefficients

\begin{equation*}
\begin{cases}
\mathcal{a}(y) = (y^2-1)(y^2-b^2) 
\\[.1in]
\mathcal{b}(y) = \mathcal{a}'(y) + 2\beta (y^2-1)
\\[.1in]
\mathcal{c}(y) = 2 \beta y  ,
\end{cases}
\end{equation*}

\noindent where $\beta \in i\RR$, \ $a = -b$ and $b>0$.

\item $k(z) = \displaystyle \frac{1}{z}$ and $L$ has coefficients

\begin{equation*}
\begin{cases}
\mathcal{a}(y) = (y^2-1)(y-a) (y-b) 
\\[.1in]
\mathcal{b}(y) = \mathcal{a}'(y) + \beta (y^2-1)
\\[.1in]
\mathcal{c}(y) = 2 y^2 + \left(\beta -a-b\right) y  ,
\end{cases}
\end{equation*}

\noindent where $\beta \in i\RR$ and $a<b$ are real.

\end{enumerate}

\end{corollary}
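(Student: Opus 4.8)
The plan is to read the corollary off the classification in Theorem~\ref{THM commutation}, the self-adjointness criterion of Remark~\ref{normal}, and the single structural fact that \eqref{R3} is a pointwise identity in the independent variables $y$ and $z=x-y$. Because that identity is insensitive to where the output point $x=y+z$ is allowed to range, every pair $(k,L)$ listed in Theorem~\ref{THM commutation} automatically satisfies
\[
K L_{(-1,1)} u = L_{(a,b)} K u,\qquad u\in C^\infty[-1,1],
\]
for an arbitrary line segment $(a,b)\subset\CC$: the integration by parts that produced \eqref{R3} only generates boundary terms at the endpoints $y=\pm1$ of the inner integral, and these vanish by the conditions $\mathcal{a}(\pm1)=0$, $\mathcal{b}(\pm1)=\mathcal{a}'(\pm1)$ built into \eqref{L}. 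Thus the first line of \eqref{shifted commutation} is free, and the entire problem reduces to determining, family by family, the parameter values and segments $(a,b)$ for which both $L_{(-1,1)}$ and $L_{(a,b)}$ become self-adjoint.

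First I would impose the formal self-adjointness conditions of Remark~\ref{normal}, namely $\Im\mathcal{a}=0$, $\Re\mathcal{b}=\mathcal{a}'$ and $\Im\mathcal{c}=\tfrac12\Im\mathcal{b}'$, on each of the four families (the general kernel \eqref{k general}--\eqref{a,b,c general} and special cases~2--4 of Theorem~\ref{THM commutation}). For the general kernel, reality of $\mathcal{a}(y)=\lambda^{-2}[\cosh(\lambda y)-\cosh\lambda]$ on $[-1,1]$ forces $\lambda^2\in\RR$, hence $\lambda\in\RR\cup i\RR$, after which reality of $\mathcal{c}=(\tfrac{\lambda^2}{4}-\mu^2)\mathcal{a}$ forces $\mu^2\in\RR$, i.e. $\mu\in\RR\cup i\RR$; since $\mathcal{b}=\mathcal{a}'$ is already real the remaining two conditions hold identically. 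For the special cases the quantity $\mathcal{b}-\mathcal{a}'$ is a fixed multiple of $\beta$ (equal to $\beta\mathcal{a}_0$ in case~2 and to a multiple of $(y^2-1)$ in the polynomial cases~3,4), so $\Re\mathcal{b}=\mathcal{a}'$ forces exactly $\beta\in i\RR$ and $\alpha\in\RR$, while a short check shows $\Im\mathcal{c}=\tfrac12\Im\mathcal{b}'$ is then automatic; reality of the leading polynomial coefficient likewise forces $a,b\in\RR$ in cases~3,4. These are precisely the parameter constraints appearing in the four items of the corollary.

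Next I would locate the admissible endpoints as the zeros of the leading coefficient $\mathcal{a}$, since self-adjointness of $L_{(a,b)}$ demands at the very least that the degree-two term degenerate there, $\mathcal{a}(a)=\mathcal{a}(b)=0$. In the hyperbolic/trigonometric families this reads $\cosh(\lambda x)=\cosh\lambda$, whose solution set is $\{\pm1+\tfrac{2\pi i n}{\lambda}:n\in\ZZ\}$ by the $2\pi i$-periodicity of $\cosh$, so the only segments congruent to $[-1,1]$ are the vertical translates $a=-1+\tfrac{2\pi i n}{\lambda}$, $b=1+\tfrac{2\pi i n}{\lambda}$. In the polynomial families $\mathcal{a}$ factors as $(y^2-1)\mathcal{p}(y)$ and the new endpoints are the roots of $\mathcal{p}$; the parity constraint $\mathcal{p}'(0)=0$ imposed in Theorem~\ref{THM commutation}(3) forces $\mathcal{p}$ even and hence $a=-b$, whereas Theorem~\ref{THM commutation}(4) carries no such constraint and admits arbitrary real $a<b$. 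This is the origin of the symmetric versus asymmetric splitting between items~3 and~4.

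The step demanding the most care, and the main obstacle, is verifying that $L_{(a,b)}$ is genuinely self-adjoint and not merely formally symmetric, for which I would examine the Lagrange boundary form $\big[\mathcal{a}(u'\bar v-u\bar v')+i\tau\,u\bar v\big]_a^b$ with $\tau=\Im\mathcal{b}$. For items~1 and~2 this is transparent: the first summand dies because $\mathcal{a}(a)=\mathcal{a}(b)=0$, and $\tau$ itself vanishes at the new endpoints (being identically $0$ in case~1, and a multiple of $\mathcal{a}_0$, which shares the zeros of $\mathcal{a}$, in case~2), so the form reduces to the same Wronskian expression as on $[-1,1]$; moreover the translation $x=t+\tfrac{2\pi i n}{\lambda}$ identifies $L_{(a,b)}$ with the already self-adjoint $L_{(-1,1)}$ with identical real coefficients and unchanged arclength, transferring the self-adjoint realization verbatim. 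For the polynomial items~3 and~4 the term $i\tau\,u\bar v$ does \emph{not} vanish at $a,b$, and here the endpoints must be treated as singular: a Frobenius analysis at the simple zeros of $\mathcal{a}$ gives indicial exponents $0$ and a purely imaginary number (because $1-\mathcal{b}/\mathcal{a}'$ evaluated at a root of $\mathcal{a}$ is a real multiple of $\beta\in i\RR$), so both endpoints are in the limit-circle case and $L_{(a,b)}$ admits a self-adjoint realization whose domain annihilates the boundary form. Collecting the four families with their constraints then yields the stated list, the $\lambda=\pi i$ instance of Theorem~\ref{THM commutation}(1) being recovered inside item~1 under the restrictions of Remark~\ref{REM lambda in iR}.
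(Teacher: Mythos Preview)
Your proposal is correct and follows essentially the same route as the paper: read off the $(k,L)$ pairs from Theorem~\ref{THM commutation}, impose the formal self-adjointness conditions of Remark~\ref{normal} to obtain the parameter restrictions, and pick the endpoints $a,b$ as the other zeros of $\mathcal{a}$ (via $\tfrac{2\pi i}{\lambda}$-periodicity in items~1--2 and via the factorization of the polynomial $\mathcal{a}$ in items~3--4, with the constraint $\mathcal{p}'(0)=0$ forcing $a=-b$ in item~3). The paper's own proof is a one-paragraph sketch that stops at exactly this point.

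The only substantive difference is that you go further than the paper does: your final paragraph attempts to upgrade formal symmetry to a genuine self-adjoint realization on $(a,b)$ by examining the Lagrange boundary form and, in the polynomial cases, invoking a Frobenius/limit-circle argument at the singular endpoints. The paper does not engage with this at all---it treats ``self-adjoint'' at the level of the coefficient identities in Remark~\ref{normal} and, for items~1--2, simply notes that the periodicity of the coefficients transports the self-adjointness from $[-1,1]$ to $[a,b]$. So your extra analysis is not wrong, but it is addressing a question the paper leaves implicit; for the purpose of matching the paper's argument you may safely omit it.
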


\begin{proof}
The proof immediately follows from Theorem~\ref{THM commutation} and discussion above, we just mention that in item 1 the restrictions $\lambda, \mu \in \RR \cup i\RR$ make $L$ self-adjoint on $[-1,1]$, the choice of $a,b$ follows from the fact that coefficients of $L$ are $\frac{2\pi i}{\lambda}$-periodic. Therefore, $L$ is also self-adjoint on $[a,b]$. Similarly, in items 2, 3 and 4 the condition $\beta \in i\RR$ guarantees self-adjointness of $L$. In item 3 we are forced to take $a = -b$, because in the corresponding commutation relation (item 3 of Theorem~\ref{THM commutation}) $\mathcal{a}(y) = (y^2-1)\mathcal{p}(y)$ where $\mathcal{p}'(0) = 0$, hence $\mathcal{p}(y) = y^2 - b^2$.
\end{proof}

\vspace{.1in}

\begin{remark} \label{REM adding tau in examples}
Due to Remark~\ref{REM multiplier} it is easy to check that in Corollary~\ref{CORO examples}, in each of the four items $K$ can be replaced by $MKM^{-1}$ and $L$ by $MLM^{-1}$, where $M$ is multiplication operator by $e^{\tau z}$ and (in addition to given parameter restrictions) it must hold $\tau \in i\RR$ in order for $MLM^{-1}$ to be self-adjoint. Note that in this case $M$ is a unitary operator, therefore $M L M^{-1}$ is self-adjoint \IFF $L$ is.  However, for item 2 there is an additional case: $\tau \in \CC$ and $\beta = 2 i \alpha \Im \tau$.
\end{remark}

\begin{remark}
Taking $\beta = 0$ in item 4 we obtain the commutation used in \cite{al-aifari katsevich}, \cite{katsevich1}, \cite{katsevich2}, \cite{katsevich tovbis} mentioned in the introduction. Indeed, since any real constant can be added to $\mathcal{c}$ we can rewrite $\mathcal{c}(y) = 2 \left( y - \frac{a+b}{4} \right)^2$, which is precisely the form of $\mathcal{c}$ used in those references.  
\end{remark}

\begin{remark} \label{REM discrete spectrum}
Observe that in all of the cases $k(z)$ has a singularity and the
corresponding operator $K$ is not compact. The spectrum of $K^{*}K$ therefore,
need not be discrete (e.g. \cite{koppelman pincus}). Yet it was found to be
discreet in most cases of the finite Hilbert transform SVD
\cite{al-aifari katsevich,katsevich1,katsevich2,katsevich tovbis}. The discreteness of the SVD decomposition comes from the
discreteness of the spectrum of self-adjoint differential operators $L_{1}$ and $L_{2}$ in \eqref{C2}, provided that singularities of $Ku$ are not at the
end-points of the interval for the Sturm-Liouville eigenvalue problem for
$L_{2}$. In particular the situation when $(-1,1)$ and $(a,b)$ intersect does not in and of itself cause the appearance of continuous spectrum. In the context of operators listed in Corollary~\ref{CORO examples} we can characterize when true singularities occur.
Let $\{z_j\}$ be the simple poles of $k$, then the function $(Ku)(\xi) =
\int_{-1}^1 k(\xi-y) u(y) \D y$ may have (logarithmic) singularities at $\{z_j
\pm 1\}$ (cf. \cite{gakhov} sections 8.5 and 8.5). Let also $\{y_j\}$ be the
zeros of $\mathcal{a}(y)$. If the set of removable singularities $\{y_j\} \setminus \{z_j \pm 1\}$  has
at least two points, say $a$ and $b$, then $K u$ is regular at points $a, b$
and so (using \eqref{shifted commutation}) $K$ maps eigenfunctions of
$L_{(-1,1)}$ to eigenfunctions of $L_{(a,b)}$, making the former the
eigenfunctions of $K^{*}K$. We will call this case regular. Generically, all
operators in items 1 and 2 from Corollary~\ref{CORO examples} belong to the singular
case. Regular cases arise for special choices of parameters, for which some of
the singularities of $k(z)$ are eliminated. For example, taking 
$\alpha_1 = 0, \ \lambda = i \frac{\pi}{2}, \ \mu = i \frac{\pi}{8}$ we obtain

\begin{equation*}
k(z) = \frac{1}{\sin \left( \frac{\pi}{8}z \right)}, \qquad \qquad
\begin{cases}
\mathcal{a}(y) = \cos \left( \frac{\pi}{2}y \right)
\\
\mathcal{b}(y) = \mathcal{a}'(y) 
\\
\mathcal{c}(y) = - \frac{3\pi^2}{64} \mathcal{a}(y)
\end{cases} .
\end{equation*}

\noindent Now the set of removable singularities is $\{1+2n\}_{n \in \ZZ} \backslash \{8m \pm 1\}_{m \in
  \ZZ}$, which contains the points $a = 3, \ b=5$.
\end{remark}

\section{Commutation, regular case} \label{Comm analytic SECTION}

\begin{lemma} \label{LEM a b c comm}
Assume the setting of Theorem~\ref{THM commutation} case $(i)$, then for some complex constants $\alpha, \nu$ we have

\begin{equation} \label{a ode comm}
\mathcal{a}'''(y) + \alpha \mathcal{a}(y) = 0, \qquad \mathcal{b}(y) = \mathcal{a}'(y), \qquad \mathcal{c}(y) = \nu \mathcal{a}(y) .
\end{equation}

\end{lemma}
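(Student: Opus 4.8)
The plan is to exploit the main idea announced in the Preliminaries: differentiate the relation \eqref{R3} in $z$, evaluate at $z=0$, and extract differential relations among $\mathcal{a},\mathcal{b},\mathcal{c}$. We are in case $(i)$, so $k$ is analytic near $0$ and nontrivial. Setting $z=0$ directly in \eqref{R3} gives nothing (all bracketed coefficients vanish), so the substance comes from the $z$-derivatives. Writing \eqref{R3} as $P(y,z)k''(z)+Q(y,z)k'(z)+R(y,z)k(z)=0$ with
\begin{align*}
P(y,z)&=\mathcal{a}(y+z)-\mathcal{a}(y),\\
Q(y,z)&=2\mathcal{a}'(y)+\mathcal{b}(y+z)-\mathcal{b}(y),\\
R(y,z)&=\mathcal{c}(y+z)-\mathcal{c}(y)+\mathcal{b}'(y)-\mathcal{a}''(y),
\end{align*}
I note $P(y,0)=0$, $Q(y,0)=2\mathcal{a}'(y)$, $R(y,0)=\mathcal{b}'(y)-\mathcal{a}''(y)$, and that $\partial_z^j P(y,z)|_{z=0}=\mathcal{a}^{(j)}(y)$ for $j\ge 1$, with analogous formulas for $Q,R$.

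\textbf{First} I would record the identities obtained at $z=0$ for the zeroth, first, and second $z$-derivatives of the relation. The zeroth order gives
\[
2\mathcal{a}'(y)\,k'(0)+\big(\mathcal{b}'(y)-\mathcal{a}''(y)\big)k(0)=0,
\]
and differentiating once and twice in $z$ before setting $z=0$ produces two more linear relations whose coefficients are $\mathcal{a}',\mathcal{b}',\mathcal{c}',\mathcal{a}'',\mathcal{b}''$ etc., multiplied by the numbers $k(0),k'(0),k''(0),k'''(0)$. The goal is to view these as a linear system in the \emph{function-valued unknowns} $\mathcal{a},\mathcal{b},\mathcal{c}$ and their derivatives, with \emph{constant} coefficients built from $k^{(j)}(0)$. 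The relation $\mathcal{b}=\mathcal{a}'$ (up to an additive issue) should fall out of the lowest-order equations, and a third-order ODE $\mathcal{a}'''+\alpha\mathcal{a}=0$ together with $\mathcal{c}=\nu\mathcal{a}$ should emerge after eliminating between the first few derivative relations.

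\textbf{The main obstacle} is justifying that the differentiation procedure \emph{terminates}: a priori one gets an infinite cascade of relations, and one must argue that only finitely many are needed to pin down $\mathcal{a},\mathcal{b},\mathcal{c}$. This is exactly where the nontriviality of $k$ enters. The remark following the theorem states the precise nondegeneracy hypothesis, $k^2(0)k''(0)-k(0)k'(0)\neq 0$, replacing the qualitative ``nontrivial'' assumption; I expect that this determinant (or one like it) is precisely the condition guaranteeing the linear system in the $k^{(j)}(0)$ is solvable for the coefficient functions. If that determinant vanished, $k$ would satisfy a constant-coefficient ODE and hence be trivial (a combination of exponentials or $e^{\alpha z}p(z)$), contradicting the hypothesis. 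So the logical structure is: either the relevant Wronskian-type determinant in $k^{(j)}(0)$ is nonzero — in which case finitely many derivative evaluations force \eqref{a ode comm} — or it vanishes, forcing $k$ trivial.

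\textbf{To finish}, after obtaining $\mathcal{b}=\mathcal{a}'$ and $\mathcal{c}=\nu\mathcal{a}$ from the low-order relations, I would substitute these back and use one further derivative relation to isolate a single third-order constant-coefficient ODE for $\mathcal{a}$, reading off $\alpha$ (and showing $\nu$ is constant) in terms of ratios of the $k^{(j)}(0)$. I would double-check consistency by confirming that the solution $\mathcal{a}(y)=\frac{1}{\lambda^2}(\cosh\lambda y-\cosh\lambda)$ from \eqref{a,b,c general} indeed satisfies $\mathcal{a}'''+\alpha\mathcal{a}=0$ for an appropriate $\alpha$ — which it does not literally (since $\mathcal{a}'''=\lambda\sinh\lambda y$ is not proportional to $\mathcal{a}$), so I would expect the correct statement to involve $\mathcal{a}'''$ proportional to $\mathcal{a}'$, or the constant term of $\mathcal{a}$ to be absorbed via the freedom of adding a constant to $\mathcal{c}$ (Remark~\ref{REM multiplier}); I would reconcile this by treating the additive constant $-\cosh\lambda/\lambda^2$ carefully, since $\mathcal{a}'''=\lambda^2\mathcal{a}'$ and it is really $\mathcal{a}'$ that satisfies the clean second-order equation. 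This bookkeeping around the additive constant in $\mathcal{a}$ is the one place I would slow down and verify against \eqref{a,b,c general} before declaring the lemma proved.
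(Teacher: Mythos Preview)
Your approach is precisely the paper's: expand $k(z)=\sum k_n z^n/n!$, differentiate \eqref{R3} $n$ times in $z$, evaluate at $z=0$, and read off relations among $\mathcal{a},\mathcal{b},\mathcal{c}$. The one device you do not name explicitly but which the paper uses to streamline the computation is the multiplier trick of Remark~\ref{REM multiplier}: after showing $k_0\neq 0$ (via an easy induction on the derivative relations, using the boundary condition $\mathcal{a}(\pm1)=0$ to force each $k_n=0$ otherwise), one replaces $k(z)$ by $e^{-k_1 z/k_0}k(z)$ so that WLOG $k_1=0$. Then the $n=0$ relation gives $\mathcal{b}'=\mathcal{a}''$ directly, hence $\mathcal{b}=\mathcal{a}'$ by the boundary conditions; $n=1$ gives $\mathcal{c}'=-\tfrac{3k_2}{k_0}\mathcal{a}'$, hence $\mathcal{c}=\nu\mathcal{a}$; $n=2$ forces $k_3=0$; and $n=3$ yields the ODE for $\mathcal{a}$, with the nondegeneracy $k_2\neq 0$ (else another induction shows $k$ is constant, hence trivial) playing exactly the role you anticipated.

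Your closing suspicion is correct and worth stating plainly: the displayed ODE in the lemma is misprinted. The $n=3$ relation the paper actually derives is $k_0k_2\,\mathcal{a}'''+(5k_0k_4-9k_2^2)\,\mathcal{a}'=0$, i.e.\ $\mathcal{a}'''+\alpha\,\mathcal{a}'=0$, not $\mathcal{a}'''+\alpha\,\mathcal{a}=0$. This is confirmed by the solution forms listed immediately after the lemma (combinations of $e^{\pm\lambda y}$ plus a constant, or a quadratic), which solve the former but not the latter, and by your sanity check against \eqref{a,b,c general}. So there is no bookkeeping subtlety for you to resolve---just a typo in the statement.
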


\begin{proof}
Write $k(z) = \sum_{n=0}^\infty \frac{k_n}{n!} z^n$ near $z=0$. The $n$-th derivative of \eqref{R3} w.r.t. $z$ evaluated at $z=0$ reads

\begin{equation} \label{nth derivative of comm relation}
\begin{split}
2\mathcal{a}'(y) k_{n+1} + [\mathcal{b}'(y)-\mathcal{a}''(y)] k_{n} + \sum_{j=0}^{n-1} C_j^n \mathcal{a}^{(n-j)}(y) k_{j+2} +& 
\\
+\sum_{j=0}^{n-1} C_j^n \mathcal{b}^{(n-j)}(y) k_{j+1} + 
\sum_{j=0}^{n-1} C_j^n \mathcal{c}^{(n-j)}(y) k_{j} &= 0 ,
\end{split}
\end{equation}

\noindent where $C_j^n  = {n \choose j}$. The above relation for $n=0$ gives

\begin{equation} \label{n=0 comm relation}
2 k_1 \mathcal{a}'(y) + [\mathcal{b}'(y) - \mathcal{a} ''(y)] k_0 = 0 .
\end{equation}

Assume first $k_0 = 0$, then $k_1 = 0$ (otherwise the boundary conditions imply $\mathcal{a}=0$). By induction one can conclude $k_j = 0$ for any $j$. Indeed, let $k_j = 0$ for $j=0,...,n$, then \eqref{nth derivative of comm relation} reads 

\begin{equation*}
(n+2) \mathcal{a}'(y) k_{n+1} = 0 .
\end{equation*}

\noindent Hence the boundary conditions imply $k_{n+1} = 0$. So if $k_0 = 0$, then $k(z)$ must be identically zero near $z=0$, which we
do not allow. 

Thus $k_0 \neq 0$, and in view of Remark~\ref{REM multiplier} we may assume $k_1 = k'(0) = 0$ (otherwise multiply $k(z)$ by $e^{-k_1/k_0 z}$). Taking into account the boundary conditions, from \eqref{n=0 comm relation} we obtain $\mathcal{b}(y) = \mathcal{a}'(y)$. Now we substitute this in \eqref{nth derivative of comm relation} with $n=1$, integrate the result to find the expression for $\mathcal{c}$ in \eqref{a ode comm} with $\nu = - \tfrac{3k_2}{k_0}$. When $n=2$ equation \eqref{nth derivative of comm relation}, after
elimination of $\mathcal{b}$ and $\mathcal{c}$ becomes $k_3 \mathcal{a}'(y) = 0$
and we conclude that $k_3 = 0$.
When $n = 3$, we find

\[
k_0 k_2 \mathcal{a}'''(y) + (5k_0 k_4 - 9 k_2^2) \mathcal{a}'(y) = 0 .
\]

If $k_2 = 0$, then $k_4 = 0$ and as can be immediately seen from \eqref{nth derivative of comm relation}, induction argument shows that $k_j = 0$ for all $j\ge 1$. 
Thus, we may assume $k_2 \neq 0$, in which case $\mathcal{a}$ satisfies the ODE in \eqref{a ode comm}.
\end{proof}

\vspace{.1in}

\noindent From \eqref{a ode comm} $\mathcal{a}$ has to have one of the following forms, with $a_j \in \CC$

\begin{enumerate}
\item[I.] $\displaystyle \mathcal{a}(y) = a_1 e^{\lambda y} + a_2 e^{- \lambda y} + a_0 $, with $0 \neq \lambda \in \CC$

\item[II.] $\displaystyle \mathcal{a}(y) = a_2 y^2 + a_1 y + a_0 $ 
\end{enumerate}

\vspace{.1in}

\noindent $\bullet$ Assume case I holds, replacing the expressions for $\mathcal{a},\mathcal{b},\mathcal{c}$ from Lemma~\ref{LEM a b c comm}, \eqref{R3} becomes a linear combination of exponentials $e^{\pm \lambda y}$ with coefficients depending only on $z$, hence each coefficient  must vanish. These can be simplified as $a_j \left\{ k'' +  \lambda \coth\left( \frac{\lambda}{2} z \right) k' + \nu k \right\} = 0$ for $j=1,2$. Of course, at least one of $a_1, a_2$ is different from zero and so we deduce

\begin{equation} \label{comm analytic k exp ODE}
\displaystyle k'' + \lambda \coth\left( \tfrac{\lambda}{2} z \right) k' + \nu k = 0 .
\end{equation}  

\noindent Setting $u(z) = k(z) \sinh \left( \tfrac{\lambda}{2} z \right)$,
the above ODE becomes $u'' + \left( \nu - \frac{\lambda^2}{4}
\right) u = 0$. So,

\begin{equation*}
k(z) = \frac{\sinh (\mu z)}{\mu \sinh \left( \tfrac{\lambda}{2} z \right)}
\qquad \qquad \mu^2 = \frac{\lambda^2}{4} - \nu .
\end{equation*}

\noindent When $\mu = 0$, the formula is understood in the limiting sense. Note that this is \eqref{k general} with $\alpha_2 = 0$ (here $\alpha_2$ refers to the
parameter in formula \eqref{k general}, whose vanishing makes $k(z)$ analytic
on $[-2,2]$.) Because $\mathcal{a}(y)$ satisfies the boundary conditions we must have $a_1 = a_2$ or $\lambda \in \pi i n$ for some $n \in \ZZ$. If $\lambda = \pi i n$, then for $k$ to be smooth in $[-2,2]$ we must have $\mu \neq 0$, moreover $\sinh \left( \tfrac{2\mu m}{n} \right) = 0$ for any $m \in \ZZ$ with $\frac{m}{n} \in [-1,1]$. In particular this should hold for $m=1$, which implies $\mu = \frac{\lambda l}{2}$ for some $l \in \ZZ$, which in turn implies that $k$ is a trigonometric polynomial, and hence is trivial. Thus we may assume $\lambda \notin \pi i \ZZ$, and so $a_1 = a_2$, showing that $\mathcal{a}(y) = \cosh(\lambda y) - \cosh \lambda$. 

Now we show that if $\lambda \in i\RR$, then it must hold $|\lambda| < \pi$. Otherwise, $k$ is trivial. Indeed, assume $\lambda \in i\RR$ and $|\lambda| \geq \pi$ we see that the denominator of $k(z)$ has additional zeros at $z=\pm \frac{2 \pi i}{\lambda} \in [-2,2]$. In order for $k$ to be smooth, we require that its numerator also vanishes at these points. So $\sinh \left( \frac{2 \pi i}{\lambda} \mu \right) = 0$ and hence $\mu = \frac{\lambda}{2} m$ for some $m \in \ZZ$. But then, again $k$ is a trigonometric polynomial.

\vspace{.1in}

\noindent $\bullet$ Assume case II holds, then $\mathcal{a}(y) = a_2 (y^2 - 1)$ and substituting into \eqref{R3} we find

\begin{equation}  \label{comm analytic k pol ODE}
z k'' + 2 k' + \nu z k = 0 .
\end{equation}

\noindent Setting $u(z) = z k(z)$ the ODE turns into $u'' + \nu u = 0$, which corresponds to the limiting case $\lambda = 0$ in the formulas for $k$ and $\mathcal{a}$ and concludes the proof of Theorem~\ref{THM commutation} case $(i)$.

\section{Commutation, singular case}

Here we prove Theorems~\ref{THM commutation} case $(ii)$. In the first subsection below we obtain the possible forms for the functions $\mathcal{a},\mathcal{b}$ and $\mathcal{c}$. In the second one we do reduction of these forms, and finally in the third one we find $k$.

\subsection{Forms of $\mathcal{a} ,\mathcal{b}$ and $\mathcal{c}$}

By the assumption $k(z) = z^{-1} (k_0 + k_1 z +...)$, with $k_0 \neq 0$. So by rescaling we let $k_0 = 1$ and in view of Remark~\ref{REM multiplier} we may assume $k_1 = 0$ (otherwise multiply $k(z)$ by $e^{-k_1/k_0 z}$). Multiply \eqref{R3} by $z^3$ and refer to the resulting relation by (E). Differentiate (E) three times w.r.t. $z$ and let $z=0$ to get

\begin{equation} \label{c in terms of b singular comm*}
\mathcal{c}(y) = -\tfrac{1}{3} \mathcal{a}''(y) - 2k_2 \mathcal{a}(y) + \tfrac{1}{2} \mathcal{b}'(y) + \text{const} .
\end{equation} 

\noindent Substitute this into (E), differentiate the result 4 times w.r.t. $z$ and let $z = 0$, then

\begin{equation} \label{b'''}
\mathcal{b}''' =  \mathcal{a}^{(4)} + 24 k_2 \mathcal{a}'' - 72 k_3 \mathcal{a}' -24 k_2 \mathcal{b}' .
\end{equation}

\noindent In the fifth derivative of (E) we replace $b^{(4)}$ and $b'''$ using the above relation, then the result reads

\begin{equation} \label{alpha3 b'}
\alpha_1 \mathcal{b}' = \mathcal{a}^{(5)} + 120 k_2 \mathcal{a}^{(3)} + \alpha_1\mathcal{a}'' + \alpha_2 \mathcal{a}' ,
\end{equation}

\noindent where $\alpha_1 = - 1080 k_3$ and the
expression for $\alpha_2$ is not important. Now if $\alpha_1 = 0$ we got a
linear constant coefficient ODE for $\mathcal{a}$, otherwise we substitute the
formula for $\mathcal{b}'$ from \eqref{alpha3 b'} into \eqref{b'''} and again obtain an ODE for $\mathcal{a}$, more precisely, for some constants $\beta_j \in \CC$, either

\begin{enumerate}
\item[(A)] $\alpha_1 = 0$ and $\mathcal{a}^{(4)} + \beta_1 \mathcal{a}'' + \beta_2 \mathcal{a} = \beta_0$, or

\item[(B)] $\alpha_1 \neq 0$ and $\mathcal{a}^{(6)} + \beta_3 \mathcal{a}^{(4)} + \beta_1 \mathcal{a}'' + \beta_2 \mathcal{a} = \beta_0$

\end{enumerate} 

\noindent Therefore, using the fact that ODEs in (A) and (B) contain only even
derivatives of $\mathcal{a}$, we can conclude that in either case $\mathcal{a}$ has one of the following forms, with $p_j, a_j, \tilde{a}_j \in \CC$; \ $\lambda_j, \lambda, \mu \in \CC \backslash \{0\}$ and $\lambda \neq \pm \mu$ and $\lambda_j \neq \pm \lambda_l$ for $j \neq l$,

\begin{enumerate}
\item[I.] 

\begin{enumerate}
\item[1)] $\displaystyle \mathcal{a}(y) = \sum_{j=1}^3 (a_j e^{\lambda_j y} + \tilde{a}_j e^{-\lambda_j y}) + a_0$

\item[2)] $\displaystyle \mathcal{a}(y) =  \sum_{j=1}^2 (a_j e^{\lambda_j y} + \tilde{a}_j e^{-\lambda_j y})+\sum_{j=0}^2 p_j y^j$

\item[3)] $\displaystyle \mathcal{a}(y) =  a_1 e^{\lambda y} + \tilde{a}_1 e^{-\lambda y} + \sum_{j=0}^4 p_j y^j$
\end{enumerate}

\item[II.] 
\begin{enumerate}

\item[1)] $\displaystyle \mathcal{a}(y) =  (a_1 y+\tilde{a}_1) e^{\lambda y} + (a_2y+\tilde{a}_2) e^{-\lambda y} + a_3 e^{\mu y} + \tilde{a}_3 e^{-\mu y} + a_0$

\item[2)] $\displaystyle \mathcal{a}(y) =  (a_1 y+\tilde{a}_1) e^{\lambda y} + (a_2y+\tilde{a}_2) e^{-\lambda y} + p_2 y^2 + p_1y + p_0$

\end{enumerate}

\item[III.] $\displaystyle \mathcal{a}(y) =  (a_2 y^2 +a_1y + a_0) e^{\lambda y} + (\tilde{a}_2 y^2 +\tilde{a}_1y + \tilde{a}_0) e^{-\lambda y} + a_3$

\item[IV.] $\displaystyle \mathcal{a}(y) =  \sum_{j=0}^6 a_j y^j$
\end{enumerate}

\noindent  If $\alpha_1 \neq 0$, then from \eqref{alpha3 b'} we see that $\mathcal{b}$ has exactly the same form as $\mathcal{a}$. Assume now $\alpha_1 = 0$, if $k_2 = 0$ we find from \eqref{b'''} that $\mathcal{b}(y) = \mathcal{a}'(y) + p_2 (y^2-1)$, if $k_2 \neq 0$, then $\mathcal{b}$ is of the same form as $\mathcal{a}$ only it might contain two extra exponentials $e^{\pm \sqrt{-24k_2} y}$, if those differ from all the exponentials appearing in $\mathcal{a}$, otherwise if one of them coincides, say with $e^{\lambda y}$, then the polynomial multiplying the latter gets one degree higher. Finally, $\mathcal{c}$ is of the same form as $\mathcal{b}$.

\subsection{Reduction}

Our goal is to reduce the cases I--IV and conclude that $\mathcal{a}(y)$ can have one of the two forms $a_1 e^{\lambda y} + a_2 e^{-\lambda y} + a_0$ or $\sum_{j=0}^6 a_j y^j$. Moreover, $\mathcal{b}$ and $\mathcal{c}$ must have exactly the same form as $\mathcal{a}$, but possibly with different constants $b_j, c_j$ instead of $a_j$. This reduction will be achieved by the three lemmas below.

\begin{lemma} \label{LEMMA polynomial is const}
If the functions $\mathcal{a}, \mathcal{b}, \mathcal{c}$ contain an exponential term, the polynomial multiplying it must be constant.
\end{lemma}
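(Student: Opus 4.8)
The plan is to exploit that \eqref{R3} is linear in the triple $(\mathcal a,\mathcal b,\mathcal c)$ and that every operation appearing in it — the shifts $y\mapsto y+z$, together with $\tfrac{d}{dy}$ and $\tfrac{d^2}{dy^2}$ — preserves the exponential type $e^{\lambda y}$, altering only the polynomial prefactor (and, for a shift, inserting a factor $e^{\lambda z}$). Hence, once the forms I--IV are substituted into \eqref{R3}, the coefficient of each distinct $e^{\lambda y}$ must vanish separately, and it suffices to fix one nonzero exponent $\lambda$ and treat $\mathcal a=P(y)e^{\lambda y}$, $\mathcal b=Q(y)e^{\lambda y}$, $\mathcal c=R(y)e^{\lambda y}$, proving $\deg P=\deg Q=\deg R=0$. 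Suppose this fails and let $d\ge1$ be the largest of the three degrees, with leading coefficients $p_d,q_d,r_d$, not all zero.

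Dividing \eqref{R3} by $e^{\lambda y}$ turns it into a polynomial identity in $y$ with $z$-dependent coefficients, and I would read off its top two powers. With the shorthand
\begin{equation*}
\Phi(A,B,C)(z) := A\big[(e^{\lambda z}-1)k''+2\lambda k'-\lambda^2 k\big]+B\big[(e^{\lambda z}-1)k'+\lambda k\big]+C(e^{\lambda z}-1)k ,
\end{equation*}
(which is just \eqref{R3} written for the pure exponential triple $(A,B,C)e^{\lambda y}$), the coefficient of $y^{d}$ gives $\Phi(p_d,q_d,r_d)(z)=0$, which I call (I); this is merely a second order linear ODE fixing $k$ consistently with the normalization $k=z^{-1}+k_2z+\cdots$ and carries no contradiction. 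The decisive input is the coefficient of $y^{d-1}$: since $P(y+z)=P(y)+zP'(y)+\cdots$ and multiplication of an exponential by $y$ is realized by $\partial_\lambda$, it reads
\begin{equation*}
\Phi(p_{d-1},q_{d-1},r_{d-1})(z)+d\,\partial_\lambda\Phi(p_d,q_d,r_d)(z)=0 ,
\end{equation*}
which I call (II).

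To produce the contradiction I would feed (I) into (II). Using (I) in the form $(e^{\lambda z}-1)G=-\lambda H$, with $G:=p_dk''+q_dk'+r_dk$ and $H:=2p_dk'+(q_d-\lambda p_d)k$, the offending term rewrites as
\begin{equation*}
\partial_\lambda\Phi(p_d,q_d,r_d)=H\Big[1-\tfrac{\lambda z e^{\lambda z}}{e^{\lambda z}-1}\Big]-\lambda p_d k .
\end{equation*}
The transcendental factor $\tfrac{\lambda z e^{\lambda z}}{e^{\lambda z}-1}$ cannot be manufactured by the (at most) three–dimensional image of $(A,B,C)\mapsto\Phi(A,B,C)$, so (II) over-determines $(p_{d-1},q_{d-1},r_{d-1})$. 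Concretely I would expand in powers of $z$ — equivalently, differentiate \eqref{R3} in $z$ and set $z=0$ as in \eqref{nth derivative of comm relation}, using $k=z^{-1}+k_2z+\cdots$ — and match the coefficients of $y^d$ and $y^{d-1}$ order by order; once the relations (I) are imposed, the linear system for $p_{d-1},q_{d-1},r_{d-1}$ becomes inconsistent unless $p_d=q_d=r_d=0$, contradicting the maximality of $d$.

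The main obstacle is exactly this series bookkeeping and locating the first order $z^m$ at which (II) fails. Two side issues also need care: the degenerate case $p_d=0$ — where (I) drops to first order and forces $k\equiv0$ if in addition $q_d=0$, and otherwise integrates to $k=\mathrm{const}\cdot e^{-r_d z/q_d}/(1-e^{-\lambda z})$, still a simple pole — and the resonant exponents for which $e^{\lambda z}-1$ gains extra zeros in $[-2,2]$, to be excluded as in the regular case. A cleaner finish that avoids the bookkeeping would be to observe that, by (I), $k$ continues meromorphically to $\CC$ with regular singular points at $z=2\pi i n/\lambda$, and to compare the principal part of $k$ at such a point demanded by (I) with the one demanded by (II); their incompatibility would give the contradiction directly.
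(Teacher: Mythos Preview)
Your plan is the paper's plan: isolate, in the $e^{\lambda y}$-block of \eqref{R3}, the coefficients of the two highest powers of $y$; the top one (your (I)) is a second–order ODE that pins down $k$, and the next one (your (II)) must then fail. Your $\partial_\lambda$ repackaging is neat and correctly explains \emph{why} the paper's explicit substitution eventually produces terms like $z\,e^{n\lambda z}$ that cannot be cancelled. But as you yourself flag, the proposal stops at the point where the actual work begins: you neither carry out the series matching nor the principal–part comparison. The paper does precisely this computation: it solves (I) explicitly (getting $k=\tfrac{e^{(\nu+\lambda)z}}{e^{\lambda z}-1}(\alpha_1 z+\alpha_2)$ or the $\sinh/\cosh$ variant), substitutes into (II), multiplies through by $(e^{\lambda z}-1)^3$, and reads off the contradiction from linear independence of $e^{n\lambda z}$, $z\,e^{n\lambda z}$, $z^2 e^{n\lambda z}$. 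That explicit step, split into the cases $\mu=0$ and $\mu\neq0$ and then iterated downward in degree, is the content you are missing.

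Your ``cleaner finish'' via principal parts would indeed work and is shorter than the paper's case analysis, but you should actually execute it rather than just announce it. The key observation you have not written down is that for \emph{every} $(A,B,C)$ one has
\[
\Phi(A,B,C)=A\bigl(u''-2\lambda u'+\lambda^2 u\bigr)+B\bigl(u'-\lambda u\bigr)+C\,u,\qquad u:=(e^{\lambda z}-1)k,
\]
so the image of $\Phi$ is entire. On the other hand, from your own formula $\partial_\lambda\Phi(p_d,q_d,r_d)=z e^{\lambda z}G+H-\lambda p_d k$ one checks that at $z_n=2\pi i n/\lambda$ (where $e^{\lambda z}-1$ vanishes simply and $u(z_n)\neq0$ for some $n$) the leading singularity is $2p_d\,u(z_n)\,z_n\,\lambda^{-1}(z-z_n)^{-3}$ if $p_d\neq0$, and $-q_d\,u(z_n)\,z_n\,\lambda^{-1}(z-z_n)^{-2}$ if $p_d=0\neq q_d$; either way (II) equates an entire function to one with a genuine pole, which is the contradiction. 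At $z=0$ the poles cancel (as a short expansion shows), so looking only near $z=0$ — your first suggestion — would force you into exactly the ``series bookkeeping'' the paper performs; the pole at $z_n\neq0$ is what makes the shortcut work.

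Finally, two of the ``side issues'' you list are not side issues but real cases the paper treats separately: when $p_d=0$ the ODE (I) drops order and the paper handles this first (its step~1, showing the degree of the polynomial in $\mathcal b,\mathcal c$ cannot exceed that in $\mathcal a$); and the iteration down to degree $0$ must be spelled out, since (II) in isolation only kills the top coefficient.
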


\begin{proof}
See the appendix.
\end{proof}

\begin{lemma}
The functions $\mathcal{a}, \mathcal{b}, \mathcal{c}$ cannot contain two exponentials $e^{\lambda y}, e^{\mu y}$ with $\mu \neq \pm \lambda$.
\end{lemma}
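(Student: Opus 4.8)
The plan is to substitute the exponential ansatz for $\mathcal{a},\mathcal{b},\mathcal{c}$ into \eqref{R3} and separate the resulting identity according to its $y$-dependence. By Lemma~\ref{LEMMA polynomial is const} every exponential that occurs does so with a constant coefficient, so I may write $\mathcal{a}(y)=\sum_j A_j e^{\lambda_j y}$, $\mathcal{b}(y)=\sum_j B_j e^{\lambda_j y}$, $\mathcal{c}(y)=\sum_j C_j e^{\lambda_j y}$ over the finite union of occurring frequencies, with the convention that a missing term carries a zero coefficient; a possible polynomial part corresponds to $\lambda_j=0$ and does not interact with the genuine exponentials. Since $\mathcal{a}(y+z)=\sum_j A_j e^{\lambda_j z}e^{\lambda_j y}$ and likewise for $\mathcal{b},\mathcal{c}$, and since the functions $\{e^{\lambda_j y}\}$ (together with powers of $y$) are linearly independent, \eqref{R3} splits into one identity per frequency. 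For each nonzero $\lambda=\lambda_j$ this identity is
\[
(e^{\lambda z}-1)\big(A_\lambda k'' + B_\lambda k' + C_\lambda k\big) + \lambda\big(2 A_\lambda k' + (B_\lambda - A_\lambda \lambda) k\big) = 0,
\]
a second order linear ODE in $z$ that the single fixed kernel $k$ must satisfy.

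Now suppose, for contradiction, that two frequencies $\lambda$ and $\mu$ with $\mu\neq\pm\lambda$ (both nonzero) are present, so $k$ satisfies the displayed equation both for $\lambda$ and for $\mu$. Assuming first that both exponentials genuinely occur in $\mathcal{a}$ (i.e. $A_\lambda,A_\mu\neq 0$), I treat the two identities as a linear system for $(k'',k')$ with $k$ as a parameter. Its determinant is
\[
D(z) = 2 A_\lambda A_\mu\big[\mu(e^{\lambda z}-1) - \lambda(e^{\mu z}-1)\big] + (A_\lambda B_\mu - A_\mu B_\lambda)(e^{\lambda z}-1)(e^{\mu z}-1),
\]
which is not identically zero when $\mu\neq\lambda$: the bracket already fails to vanish, as one sees by comparing the $z^2$-coefficients, which give $\tfrac{\lambda\mu}{2}(\lambda-\mu)\neq 0$. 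Solving the system yields $k'=g(z)k$ and $k''=f(z)k$, where $f,g$ are explicit ratios of exponential polynomials in $e^{\lambda z},e^{\mu z}$. The compatibility condition $k''=(k')'$ then forces the single scalar identity $f=g'+g^2$, which must hold for all $z$.

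The contradiction is extracted from this identity. Clearing denominators turns $f=g'+g^2$ into a polynomial relation in the characters $e^{m\lambda z}e^{n\mu z}$; because $\mu\neq\pm\lambda$ and $\lambda,\mu\neq 0$, the finitely many characters that appear are linearly independent, so matching coefficients yields algebraic equations on $A_\bullet,B_\bullet,C_\bullet$ that are solvable only when the coefficient triple of one of the two frequencies vanishes, contradicting its presence. It remains to dispose of the degenerate sub-cases: when $A_\lambda=0$ (or $A_\mu=0$) the corresponding equation drops to first order and pins $k'/k$ directly, after which the other equation and the same character-matching give the contradiction more quickly (if additionally $B_\lambda=0$ the equation forces $k\equiv 0$); and the case $D\equiv 0$ is seen to require $A_\lambda A_\mu=0$, returning to the previous sub-case. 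I expect the main difficulty to be organizational rather than conceptual: carefully enumerating which of $A_\bullet,B_\bullet,C_\bullet$ vanish and verifying in each branch that the surviving exponential characters are independent — it is precisely the hypothesis $\mu\neq\pm\lambda$ that guarantees this independence and hence the contradiction.
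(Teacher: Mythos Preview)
Your strategy departs substantially from the paper's. Both begin identically --- substitute the exponential ansatz into \eqref{R3} and separate by $y$-frequency to obtain, for each nonzero $\lambda$, the ODE
\[
A_\lambda(e^{\lambda z}-1)k'' + \big[2A_\lambda\lambda + B_\lambda(e^{\lambda z}-1)\big]k' + \big[B_\lambda\lambda - A_\lambda\lambda^2 + C_\lambda(e^{\lambda z}-1)\big]k = 0.
\]
From here the paper \emph{solves} this ODE explicitly: the substitution $u(z)=k(z)(e^{\lambda z}-1)$ turns it into a constant-coefficient equation, giving
\[
k(z)=\frac{e^{(\nu+\lambda)z}}{e^{\lambda z}-1}\big(\alpha_1\sinh(\rho z)+\alpha_2\cosh(\rho z)\big)
\]
(or the degenerate linear variant). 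The paper then argues that $\{\lambda,-\lambda\}$ is an \emph{invariant} of $k$: the residue at $0$ gives $k_0=\alpha_2/\lambda$, and $\alpha_2$ is recovered up to sign from the asymptotics of $k$ (or $k(iz)$) at $\pm\infty$. Hence a second frequency $\mu\neq\pm\lambda$ would force an incompatible formula for the same $k$. The case where the extra exponential appears only in $\mathcal{b},\mathcal{c}$ is a short addendum, since then the ODE is first order and yields a $k$ of the same shape.

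Your route --- eliminate $k'',k'$ between the two ODEs by Cramer's rule and impose the compatibility $f=g'+g^2$ --- is a legitimate alternative, and your verification that $D\not\equiv 0$ is essentially correct (indeed $D\equiv 0$ forces $A_\lambda A_\mu=0$, by reading off the coefficient of $e^{(\lambda+\mu)z}$ and then of $e^{\lambda z}$). However, the step you label ``organizational'' is the entire content of the proof and is not carried out: you assert that matching the characters $e^{m\lambda z+n\mu z}$ forces one coefficient triple to vanish, but this is far from automatic. After clearing denominators the identity $D^{2}f=D\,N_g'-N_g D'+N_g^{2}$ (with $g=N_g/D$) involves products of up to three exponentials and many cross-terms; the resulting algebraic system in $A_\bullet,B_\bullet,C_\bullet,\lambda,\mu$ need not be manifestly inconsistent, and you give no mechanism for showing it is. Nor do you invoke the simple-pole hypothesis on $k$, which is the ambient assumption of this section and which the paper uses essentially (via the residue). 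If the compatibility condition happened to admit a nontrivial solution, you would still need to check that the resulting $k=\exp\!\int g$ violates that hypothesis. In short, your outline is plausible but leaves the decisive computation undone, whereas the paper's explicit-solution-plus-asymptotics argument sidesteps the algebra entirely and also produces the formula for $k$ that is reused downstream.
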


\begin{proof}
Consider a typical exponential term in $\mathcal{a}, \mathcal{b}$ and $\mathcal{c}$ (due to Lemma~\ref{LEMMA polynomial is const} the polynomial multiplying it must be a constant), namely 

\begin{equation*}
\mathcal{a} \leftrightarrow a_0 e^{\lambda y}, \qquad
\mathcal{b} \leftrightarrow  b_0 e^{\lambda y}, \qquad
\mathcal{c} \leftrightarrow  c_0 e^{\lambda y} ,
\end{equation*}

\noindent where $a_0 \neq 0$. The equation coming from $e^{\lambda y}$ after substituting these forms into \eqref{R3} is (obtained analogously to the first equation of \eqref{singular k eqs y^2 and y} in the appendix)

\begin{equation*}
a_0 (e^{\lambda z} - 1) k'' + \left[2a_0\lambda + b_0 (e^{\lambda z} - 1) \right] k' + \left[ b_0 \lambda - a_0 \lambda^2 + c_0 (e^{\lambda z} - 1) \right] k = 0 .
\end{equation*}

\noindent After changing the variables $u(z) = k(z) (e^{\lambda z} - 1)$ it becomes 

\begin{equation} \label{singular k a0 eqn}
a_0 u'' + (b_0 -2a_0 \lambda) u' + (a_0 \lambda^2 - b_0 \lambda +c_0) u = 0 .
\end{equation}

\noindent Then, with $\nu = -\frac{b_0}{2a_0}$ and $\alpha_1, \alpha_2 \in \CC$ we have

\begin{equation} \label{singular k formula from exp term}
k(z) = \frac{e^{(\nu+\lambda) z} }{e^{\lambda z} - 1}\cdot
\begin{cases}
\alpha_1 z + \alpha_2, & \mu : = \sqrt{\tfrac{b_0^2}{4a_0^2} - \tfrac{c_0}{a_0}} = 0
\\
\alpha_1 \sinh(\mu z) + \alpha_2 \cosh(\mu z),  \qquad & \mu \neq 0
\end{cases}
\end{equation}

\noindent We claim that the set $\{\lambda, -\lambda\}$ is determined by the functions given above. In other words, up to the sign, $\lambda$ is determined by $k$. This will prove that in $\mathcal{a}(y)$, there cannot be another exponential $e^{\mu y}$ with $\mu \neq \pm \lambda$, because the equation coming from $e^{\mu y}$ will lead to a formula for $k$ incompatible with \eqref{singular k formula from exp term}. Computing the residue of $k$ at the pole $z=0$ we find $k_0 = \frac{\alpha_2}{\lambda}$, hence it is enough to show that $\alpha_2$ is determined up to the sign. Let $k$ be given by the second formula of \eqref{singular k formula from exp term} (in the other case the same argument will apply), write $\mu = \mu_1 + i\mu_2$ and $\lambda = \lambda_1 + i \lambda_2$. 

Let $\lambda_1 \neq 0$ and $\mu_1 \neq 0$, then w.l.o.g. we may assume $\mu_1 > 0$, otherwise negate $(\alpha_1 , \mu)$. If $\lambda_1 > 0$ we find

\begin{equation*}
k(z) \sim
\begin{cases}
\tfrac{1}{2} (\alpha_1 + \alpha_2) e^{(\nu + \mu)z}, & z \to +\infty,
\\
\tfrac{1}{2} (\alpha_1 - \alpha_2) e^{(\nu + \lambda - \mu)z}, & z \to -\infty.
\end{cases}
\end{equation*}

\noindent Therefore, $\alpha_2$ is equal to the difference of coefficients in the asymptotics of $k$ at plus and minus infinities. But when $\lambda_1 < 0$, by writing down the asymptotics, one can see that the same difference gives $-\alpha_2$.

Let now $\lambda_1 \neq 0$ and $\mu_1 = 0$, we find $k(z) \sim e^{\nu z} (i\alpha_1 \sin (\mu_2 z) + \alpha_2 \cos(\mu_2 z))$ as $z \to +\infty$ if $\lambda_1 > 0$, and when $\lambda_1 < 0$ the same formula holds, but the RHS multiplied by $- e^{\lambda z}$. Again we see that $\alpha_2$ is determined up to the sign.

Let $\lambda_1 = 0$ and $\mu_2 \neq 0$, we may assume $\mu_2 > 0$, otherwise negate $(\alpha_1, \mu)$, then 

\begin{equation*}
k(iz) \sim 
\begin{cases}
  \tfrac{1}{2} (\alpha_1 - \alpha_2) e^{i(\nu + \lambda - \mu)z}, & z \to +\infty,
\\
\tfrac{1}{2} (\alpha_1 + \alpha_2) e^{i(\nu + \mu)z}, & z \to -\infty.
\end{cases}
\end{equation*}

\noindent Finally, the case $\lambda_1 = \mu_2 = 0$ can be treated similarly.

Remains to note that $\mathcal{b}, \mathcal{c}$ cannot have an exponential $e^{\mu y}$ with $\mu \neq \pm \lambda$ either (we assume $a_0 e^{\lambda y}$ appears in $\mathcal{a}$). Indeed, if $\tilde{b}_0 e^{\mu y}$ and $\tilde{c}_0 e^{\mu y}$ appear in $\mathcal{b}$ and $\mathcal{c}$ respectively, then for $k$ we obtain an equation like \eqref{singular k a0 eqn}, but with $a_0 = 0$ and $b_0, c_0$ replaced with $\tilde{b}_0, \tilde{c}_0$, hence $k(z)=e^{(\mu + \tilde{\nu})z} / (e^{\mu z} - 1)$ with $\tilde{\nu} = - \tilde{c}_0 / \tilde{b}_0$. But this is of the same form as \eqref{singular k formula from exp term}, hence as we showed $\mu$ is determined up to its sign. In other words the two formulas for $k$ are compatible only if $\mu = \pm \lambda$.

\end{proof}

\begin{lemma}
The functions $\mathcal{a}, \mathcal{b}, \mathcal{c}$ cannot contain an exponential and a polynomial at the same time.
\end{lemma}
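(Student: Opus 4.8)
The plan is to mirror the two preceding lemmas: an exponential term pins down a formula for $k$ with infinitely many poles, whereas a genuine polynomial term pins down a formula for $k$ whose only singularity is at the origin, and the two are incompatible. So I would argue by contradiction, assuming that one of $\mathcal{a},\mathcal{b},\mathcal{c}$ contains an exponential $e^{\lambda y}$ with $\lambda\neq 0$ and that one of them simultaneously contains a polynomial term of degree $\geq 1$ (the ubiquitous additive constant counting as neither).

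First I would record what the exponential forces. Exactly as in the previous lemma, the terms proportional to $e^{\lambda y}$ in \eqref{R3} separate off, by linear independence of $e^{\lambda y}$ from the polynomial-in-$y$ contributions, yielding \eqref{singular k a0 eqn} and hence $k$ in the form \eqref{singular k formula from exp term},
\[
k(z) = \frac{e^{(\nu+\lambda)z}}{e^{\lambda z}-1}\,\bigl(\alpha_1\sinh(\mu z)+\alpha_2\cosh(\mu z)\bigr)
\]
(or its $\mu=0$ analogue), where I also allow the degenerate case $a_0=0$ treated at the end of the previous lemma when the exponential sits only in $\mathcal{b},\mathcal{c}$. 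The denominator vanishes at every $z=2\pi i n/\lambda$, $n\in\ZZ$, so $k$ has poles along this whole arithmetic progression. The only way to cancel the poles at $n\neq 0$ is for the numerator to vanish there; but $\alpha_1 z+\alpha_2$ has a single zero, while $\alpha_1\sinh(\mu z)+\alpha_2\cosh(\mu z)$ has a zero set periodic with a single period, and containing the full progression would force $\mu$ to be a rational multiple of $\lambda$, making $k$ an exponential/trigonometric polynomial, i.e.\ trivial. Thus $k$ genuinely has infinitely many poles.

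Next I would isolate the polynomial. Writing each of $\mathcal{a},\mathcal{b},\mathcal{c}$ as a polynomial part $P,Q,R$ plus exponential terms, and using that $P(y+z),Q(y+z),R(y+z)$ remain polynomial in $y$ while the exponential parts contribute only $e^{\pm\lambda y}(\cdots)$, the polynomial-in-$y$ content of \eqref{R3} must vanish on its own. This produces a copy of \eqref{R3} with $(\mathcal{a},\mathcal{b},\mathcal{c})$ replaced by $(P,Q,R)$. Collecting in this identity the coefficient of the top power of $y$ yields a linear ODE for $k$ with coefficients polynomial in $z$; the crucial point is that its leading coefficient is a nonzero constant multiple of $z$ — arising either from $P(y+z)-P(y)$ multiplying $k''$ when $\deg P\geq 2$, or, when the top power is carried by the quadratic part of $Q$, from a first-order reduction such as $zk'+k=0$. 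In every configuration the leading coefficient vanishes only at $z=0$, so the origin is the sole (regular) singular point of the ODE, $k$ continues analytically to all of $\CC\setminus\{0\}$, and in particular $k$ is regular at $z=2\pi i/\lambda$. This contradicts the previous paragraph and closes the argument.

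The one place that needs care — and which I expect to be the main obstacle — is the degree bookkeeping in the penultimate step: one must track $\deg P,\deg Q,\deg R$ through \eqref{c in terms of b singular comm*} and \eqref{b'''}, distinguishing $k_2=0$ from $k_2\neq 0$ and the genuinely low-degree cases $\deg P\in\{1,2\}$ that actually occur once an exponential is also present, so as to guarantee that the top-order equation extracted from the polynomial copy of \eqref{R3} really has leading coefficient proportional to $z$ and hence confines every singularity of $k$ to the origin.
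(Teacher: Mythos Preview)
Your approach is correct and genuinely different from the paper's. The paper solves \emph{both} ODEs explicitly: from the polynomial block it writes down the formula \eqref{singular k pol eqn} for $k$, cross-multiplies with the exponential formula \eqref{singular k formula from exp term} to get $z e^{(\nu+\lambda)z}f(z)=e^{\omega z}(e^{\lambda z}-1)g(z)$, and then kills each case by linear independence of functions like $z e^{\gamma z}$ versus $e^{\tilde\gamma z}$; it then iterates this down the polynomial degrees $a_4,b_4,c_4\to a_3,b_3,c_3\to\cdots$. Your route avoids solving the polynomial ODE at all: you only use that its leading coefficient is a nonzero multiple of $z$, so $k$ continues analytically to $\CC\setminus\{0\}$, and then the exponential formula is forced to be regular at every $2\pi i n/\lambda$ with $n\neq 0$, which you show is impossible without losing the pole at $0$. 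The payoff is a one-shot argument for all polynomial degrees simultaneously, at the price of invoking analytic continuation and a pole-cancellation analysis; the paper's argument is more elementary and stays on the real interval, but is more computational and needs the degree-by-degree iteration.

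One small sharpening for your cancellation step: once the zero set of $\alpha_1\sinh(\mu z)+\alpha_2\cosh(\mu z)$ contains both $2\pi i/\lambda$ and $4\pi i/\lambda$ you correctly get $\mu=m\lambda/2$, but then evaluating at $z=2\pi i/\lambda$ gives $\sinh(m\pi i)=0$ and $\cosh(m\pi i)=(-1)^m$, so the numerator there equals $(-1)^m\alpha_2$; hence cancellation also forces $\alpha_2=0$. At that point the pole at $z=0$ is canceled as well, and you contradict the simple-pole hypothesis directly --- there is no need to invoke ``triviality'' (which in the singular case~(ii) is not one of the standing exclusions anyway).
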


\begin{proof}
Let $a_5 e^{\lambda y} + \sum_{j=0}^4 a_j y^j$, with $a_5 \neq 0$ be part of $\mathcal{a}$. The functions $\mathcal{b}, \mathcal{c}$ also have such parts, but with possibly different constants $b_j, c_j$. From the above lemma we know that $k$ is given by \eqref{singular k formula from exp term} (with $a_0$ replaced by $a_5$). One can check that once these expressions for $\mathcal{a}, \mathcal{b}$ and $\mathcal{c}$ are substituted into \eqref{R3}, the factors $y^4$ get canceled and the equation corresponding to $y^3$ reads

\begin{equation} \label{esim}
a_4 z k'' + (b_4 z + 2a_4) k' + (c_4z +b_4)k = 0 .
\end{equation}  

\noindent Let us first show that $a_4 = 0$. For the sake of contradiction assume $a_4 \neq 0$, then the solution, with $\omega = -\frac{b_4}{2a_4}$, is given by

\begin{equation} \label{singular k pol eqn}
k(z) = \frac{e^{\omega z}}{z} \cdot
\begin{cases}
\beta_1z + \beta_2,  &\eta:=\sqrt{\tfrac{b_4^2}{4a_4^2} - \tfrac{c_4}{a_4}} = 0,
\\
\beta_1 \sinh(\eta z) + \beta_2 \cosh(\eta z), \qquad &\eta \neq 0.
\end{cases}
\end{equation}

\noindent We note that this is not compatible with \eqref{singular k formula from exp term}, because cross multiplying the two formulas we get (with $f,g$ being the second multiplying factors from \eqref{singular k formula from exp term} and \eqref{singular k pol eqn}, respectively)

\begin{equation*}
z e^{(\nu + \lambda) z} f(z) = e^{\omega z} (e^{\lambda z} -1) g(z) .
\end{equation*} 

\noindent If $g(z)=\beta_1 \sinh(\eta z) + \beta_2 \cosh(\eta z)$, we use the linear independence of $z e^{\gamma z}$ and $e^{\tilde{\gamma} z}$ to conclude that $k=0$. Let $g(z) = \beta_1z + \beta_2$, if $f$ is given by the first formula the above relation reads

\begin{equation*}
\alpha_1 z^2 e^{(\nu + \lambda)z} + \alpha_2 z e^{(\nu + \lambda)z} + \beta_1 z e^{\omega z} - \beta_1 z e^{(\omega + \lambda)z} = \beta_2 e^{(\omega + \lambda)z} - \beta_2 e^{\omega z} .
\end{equation*}

\noindent Because $\lambda \neq 0$, the exponentials on RHS are linearly independent, hence we conclude that $\beta_2 = 0$, which contradicts to $k$ having a pole at zero. When $f$ is given by the second formula the same argument applies. 

Thus, $a_4 = 0$, if $b_4 \neq 0$ we find $k(z) = e^{\omega z} / z$, but now $\omega = - c_4 / b_4$. This has the same form as \eqref{singular k pol eqn}, hence again it is incompatible with \eqref{singular k formula from exp term}. Therefore, $b_4 = 0$ and obviously $c_4 = 0$. With this information, the equation corresponding to $y^2$ is as \eqref{esim} with all subscripts changed from 4 to 3. Hence, the same procedure works and eventually we conclude $a_j = b_j = c_j = 0$ for $j=1,...,4$. 

\end{proof}

\subsection{Finding $k$}

The analysis of the previous subsection shows that we have two possible forms ($\lambda \neq 0 $)

\begin{equation*}
\text{I.} \ \mathcal{a}(y) = a_1 e^{\lambda y} + a_2 e^{-\lambda y} + a_0,
\qquad \qquad 
\text{II.} \ \mathcal{a}(y) =\sum_{j=0}^6 a_j y^j.
\end{equation*}

\noindent Moreover we also showed that in each case $\mathcal{b},\mathcal{c}$ are exactly of the same form as $\mathcal{a}$, only with possibly different constants $b_j, c_j$ instead of $a_j$.

\subsubsection{Case I}

\noindent Assume case I holds, substituting the expressions for $\mathcal{a}, \mathcal{b}, \mathcal{c}$ into \eqref{R3} we find that a linear combination of $e^{pm \lambda y}$ is zero, hence the coefficient of each exponential must vanish. Like this we obtain two ODEs for $k$. More precisely, 

\begin{equation*}
\begin{split}
a_1 (e^{\lambda z} - 1) k'' + \left[2a_1\lambda + b_1 (e^{\lambda z} - 1) \right] k' + \left[ b_1 \lambda - a_1 \lambda^2 + c_1 (e^{\lambda z} - 1) \right] k = 0,
\\
a_2 (e^{-\lambda z} - 1) k'' + \left[-2a_2\lambda + b_2 (e^{-\lambda z} - 1) \right] k' + \left[ -b_2 \lambda - a_2 \lambda^2 + c_2 (e^{-\lambda z} - 1) \right] k = 0 .
\end{split}
\end{equation*}

\noindent Note that the second equation is obtained from the first one if we negate $\lambda$ and change the subscripts of $a_1,b_1,c_1$ from 1 to 2. Consider the following cases:

\vspace{.1in}

\noindent \textbf{Case I.1.} $a_1=a_2=0$, then $\mathcal{a}\equiv 0$ and from the boundary conditions $\mathcal{b}(\pm 1) = 0$. W.l.o.g. let $b_1 \neq 0$ solving the first ODE for $k$ we get, with $\nu = -\frac{c_1}{b_1}$

\begin{equation*}
k(z) = \frac{e^{(\nu + \lambda)z}}{e^{\lambda z} -1} = \frac{e^{\left(\nu + \frac{\lambda}{2}\right)z}}{2 \sinh \left( \frac{\lambda}{2}z \right)} .
\end{equation*}

\noindent For this to satisfy also the second ODE we need $c_2 = -(\nu + \lambda) b_2$. One can check that for $k$ to be smooth in $[-2,2] \backslash \{0\}$, we cannot have $\lambda = \pi i n$, therefore the boundary conditions on $\mathcal{b}$ imply $b_1 = b_2$ and so $\mathcal{b}(y) = \cosh (\lambda y) - \cosh \lambda$. Now if $\lambda \in i\RR$, for the same reason we require $|\lambda|<\pi$. From the relation \eqref{c in terms of b singular comm*} we see that $\mathcal{c}(y) = \frac{1}{2} \mathcal{b}'(y)$. After ignoring the exponential in the numerator of the formula for $k$ (see Remark~\ref{REM multiplier}) we obtain

\begin{equation} \label{item 2 alpha 0}
k(z) =\frac{1}{\sinh \left( \frac{\lambda}{2}z \right)}, \qquad \qquad
\begin{cases}
\mathcal{a}(y) = 0,
\\
\mathcal{b}(y) = \cosh (\lambda y) - \cosh \lambda,
\\
\mathcal{c}(y) = \tfrac{1}{2} \mathcal{b}'(y).
\end{cases}
\end{equation}

\vspace{.1in}

\noindent \textbf{Case I.2.} If $a_1 \neq 0$ (the case $a_2 \neq 0$ can be treated analogously) by rescaling let us take $a_1 = \frac{1}{2}$, then as the formula \eqref{singular k formula from exp term} was obtained we get, by w.l.o.g. choosing $\nu = - \lambda / 2$, or equivalently $b_1 = \lambda a_1$ (see Remark~\ref{REM multiplier}) that

\begin{equation*}
k(z) = \frac{1}{\sinh \left( \frac{\lambda}{2}z \right)}\cdot
\begin{cases}
\alpha_1 z + \alpha_2, & \mu : = \sqrt{b_1^2 - 2c_1} = 0,
\\
\alpha_1 \sinh(\mu z) + \alpha_2 \cosh(\mu z),  \qquad & \mu \neq 0.
\end{cases}
\end{equation*}

$\bullet$ Let $k$ be given by the first formula. It is easy to check that $\lambda = \pi i n$, with $n \in \ZZ$ contradicts to the smoothness assumption on $k$, so the  boundary conditions imply that $a_1= a_2$ and therefore $\mathcal{a}(y) = \cosh(\lambda y) - \cosh \lambda$. Because of the same reason, when $\lambda \in i\RR$ we need a further restriction $|\lambda| < \pi$. The boundary conditions $\mathcal{b}(\pm 1) = \mathcal{a}'(\pm 1)$ then imply

\begin{equation*}
b_2 = -\tfrac{\lambda}{2}, \quad b_0 = 0 \quad \Rightarrow \quad \mathcal{b}(y) = \tfrac{\lambda}{2} e^{\lambda y} - \tfrac{\lambda}{2} e^{-\lambda y} = \mathcal{a}'(y) .
\end{equation*}

\noindent Now, $k$ has to satisfy also the second ODE, so we substitute the expression for $k$ there and simplify the result to find

\begin{equation*}
e^{-\frac{\lambda}{2}z} (\alpha_1 z + \alpha_2) \left( c_2 - \tfrac{\lambda^2}{8} \right) = 0 ,
\end{equation*}

\noindent which clearly implies $c_2 = \tfrac{\lambda^2}{8}$. But because this was the case $\mu = 0$ we have $c_1 = \frac{b_1^2}{2} = \tfrac{\lambda^2}{8}$ and therefore we conclude that $\mathcal{c}(y) = \tfrac{\lambda^2}{2} \mathcal{a}(y)$. Thus, we proved \eqref{k general} and \eqref{a,b,c general} of Theorem~\ref{THM commutation} in the limiting case $\mu = 0$. Moreover, when $\alpha_1 = 0$ we obtain the same kernel as in \eqref{item 2 alpha 0}, hence we can take a linear combination of the differential operator of this case and the one in \eqref{item 2 alpha 0} and $K$ will still commute with it. This proves item 2 of Theorem~\ref{THM commutation}. 

\vspace{.1in}  

$\bullet$ Let $k$ be given by the second formula. When $\lambda \in i\RR$ there are further restrictions for parameters. Let us analyze them. Firstly, if $\lambda \in i\RR$ with $|\lambda| \geq 2\pi$, then the denominator of $k$ has zeros at $\pm \frac{2\pi i}{\lambda}, \pm \frac{4\pi i}{\lambda} \in [-2,2]$, which cannot be canceled out by the numerator, therefore $|\lambda|<2\pi$. So there are two cases: when $|\lambda|<\pi$, \ $k$ is smooth in $[-2,2] \backslash \{0\}$ and when $\pi \leq |\lambda|<2\pi$ the denominator of $k$ has zeros at $\pm \frac{2\pi i}{\lambda} \in [-2,2]$, which can be canceled out by the numerator \IFF $\alpha_1 = 0$ and $\cosh \left( \frac{2 \pi i \mu}{\lambda} \right) = 0$, i.e. $\mu = \lambda \frac{2m+1}{4}$ for some $m \in \ZZ$. This is summarized in Remark~\ref{REM lambda in iR}.

Let us substitute the expression for $k$ into the second ODE, multiply the result by $e^{\frac{\lambda}{2}z}$. After simplification we obtain

\begin{equation*}
\begin{split}
\left[ (\mu^2 a_2 + \tfrac{\lambda^2 a_2}{4} + \tfrac{b_2 \lambda}{2} +c_2) \alpha_1 + \mu \alpha_2 (a_2 \lambda + b_2) \right] \sinh(\mu z) +&
\\
+ \left[ (\mu^2 a_2 + \tfrac{\lambda^2 a_2}{4} + \tfrac{b_2 \lambda}{2} +c_2) \alpha_2 + \mu \alpha_1 (a_2 \lambda + b_2) \right] \cosh(\mu z)& = 0 .
\end{split}
\end{equation*}

\noindent By linear independence we conclude that the coefficients of $\sinh(\mu z), \cosh(\mu z)$ must be zero. Or equivalently their sum and difference must be zero, but these equations can be written as

\begin{equation} \label{coefficient system from 2nd ODE}
\begin{cases}
(\alpha_1 + \alpha_2) \left( (\mu + \frac{\lambda}{2}) [(\mu + \frac{\lambda}{2})a_2 + b_2] + c_2 \right) = 0,
\\
(\alpha_1 - \alpha_2) \left( (\mu - \frac{\lambda}{2}) [(\mu - \frac{\lambda}{2})a_2 - b_2] + c_2 \right) = 0 .
\end{cases}
\end{equation}

\noindent The boundary conditions $\mathcal{a}(\pm 1) = 0$ imply that $a_0 = -a_1 e^\lambda - a_2 e^{-\lambda}$ and

$$(a_1 - a_2) (e^\lambda - e^{-\lambda}) = 0 .$$

$a)$ Let $a_2 = a_1$, then $\mathcal{a}(y) = \cosh(\lambda y) - \cosh(\lambda)$ and from the boundary conditions $\mathcal{b}(\pm 1) = \mathcal{a}'(\pm 1)$ we find $\mathcal{b}(y) = \mathcal{a}'(y)$ as was discussed above. Now in this case \eqref{coefficient system from 2nd ODE} simplifies to

\begin{equation*}
\begin{cases}
(\alpha_1 + \alpha_2) \left( \tfrac{\lambda^2}{4} - \mu^2 - 2c_2 \right) = 0,
\\
(\alpha_1 - \alpha_2) \left( \tfrac{\lambda^2}{4} - \mu^2 - 2c_2 \right) = 0 .
\end{cases}
\end{equation*}

\noindent But because both $\alpha_1, \alpha_2$ are not zero at the same time, we get $c_2 = \tfrac{1}{2} (\tfrac{\lambda^2}{4} - \mu^2)$. From the definition of $\mu$ we see that also $c_1 = \tfrac{1}{2} (\tfrac{\lambda^2}{4} - \mu^2)$. And using the freedom of choosing $c_0$ we conclude that we may write $\mathcal{c}(y) = (\tfrac{\lambda^2}{4} - \mu^2) \mathcal{a}(y)$. This proves \eqref{k general} and \eqref{a,b,c general} of Theorem~\ref{THM commutation} in the case $\mu \neq 0$.

$b)$ Let $e^\lambda = e^{-\lambda}$, i.e. $\lambda = \pi i n$ for some $n \in \ZZ$. But the above discussion implies that $\alpha_1 = 0$, $\lambda = \pi i$ (or $-\pi i$, but this would lead to the same results) and $\mu = \lambda \frac{2m+1}{4}$ with $m \in \ZZ$. In this case \eqref{coefficient system from 2nd ODE} implies

\begin{equation*}
b_2 = - \lambda a_2, \qquad \qquad c_2=a_2 \left( \tfrac{\lambda^2}{4} - \mu^2 \right) .
\end{equation*}

\noindent Recalling that $b_1 = \lambda a_1$, the boundary conditions $\mathcal{b}(\pm 1) = \mathcal{a}'(\pm 1)$ imply $b_0 = 0$ and so far we have $\mathcal{a}(y) = a_1 (e^{\lambda y} - e^\lambda)  + a_2 (e^{-\lambda y} - e^{-\lambda})$ and $\mathcal{b}(y) = \mathcal{a}'(y)$. Finally, again from the definition of $\mu$ we have $c_1 = a_1 (\tfrac{\lambda^2}{4} - \mu^2)$. This and the above formula for $c_2$ (and the freedom of choosing $c_0$) allow one to write $\mathcal{c}(y) = (\tfrac{\lambda^2}{4} - \mu^2) \mathcal{a}(y)$. This proves item 1 of Theorem~\ref{THM commutation}. Of course to start with we assumed $a_1 \neq 0$ and we normalized $a_1 = \tfrac{1}{2}$, but when considering the case $a_2 \neq 0$ we can allow $a_1$ to vanish. This explains why there are no restrictions on $\alpha, \beta$ in item 1 of Theorem~\ref{THM commutation}.

\subsubsection{Case II}

\noindent Assume case II holds, substituting the expressions for $\mathcal{a}, \mathcal{b}, \mathcal{c}$ into \eqref{R3} we find that a linear combination of monomials $y^j$ is zero, hence the coefficient of each $y^j$ must vanish (one can check that $y^6$ cancels out). These relations can be conveniently written as

\begin{equation} \label{singular comm relations for k}
\begin{split}
\left[ \frac{\mathcal{a}^{(j)}(z)}{j!} - a_j \right] k'' +\left[ \frac{\mathcal{b}^{(j)}(z)}{j!} - b_j + 2(j+1)a_{j+1} \right] k' + &
\\
+\left[ \frac{\mathcal{c}^{(j)}(z)}{j!} - c_j + (j+1)b_{j+1} - (j+1)(j+2)a_{j+2} \right]k &= 0, \qquad j=0,...,5 ,
\end{split}
\end{equation}

\noindent with the convention that $a_7 = 0$. Let $\deg (\mathcal{a}) = m,\ \deg (\mathcal{b}) = n $ and $\deg(\mathcal{c})=s$.

\vspace{.1in}

\noindent \textbf{Case II.1.} Let $\mathcal{a} \equiv 0$, then $\mathcal{b}(\pm 1) = 0$ and hence $n \geq 2$. By scaling we let $b_n = 1$. We are going to show that $n$ cannot be strictly larger than 2 and so $n=2$. Note that $s \leq n$, otherwise the above relation with $j=s - 1$ reads $c_s z k = 0$, which implies $k=0$ since $c_s \neq 0$ by the definition of $s$. Now  \eqref{singular comm relations for k} with $j=n - 1$ reads

\begin{equation} \label{singular comm a=0 ode for k}
z k' + [1 + c_n z ] k = 0 ,
\end{equation}

\noindent whose solution is given by $k(z)=\alpha \frac{e^{-c_n z}}{z }$, where $\alpha \in \CC$. Invoking Remark~\ref{REM multiplier} we may w.l.o.g. assume $c_n = 0$. The relation with $j=n-2$ becomes

\begin{equation*}
\left[ \tfrac{n}{2} z^2 + b_{n-1} z \right] k'+\left[c_{n-1} z+b_{n-1} \right] k=0 .
\end{equation*}

\noindent Substituting $k(z) = \frac{1}{z}$ into this equation we obtain  $c_{n-1} = \frac{n}{2}$. Now, if $n>2$ we consider the relation for $j=n-3$, which reads

 $$\left[ \tfrac{n(n-1)}{6} z^3 + \tfrac{n-1}{2} b_{n-1} z^2 + b_{n-2} z \right] k' + \left[ \tfrac{n-1}{2} c_{n-1} z^2 + c_{n-2} z + b_{n-2} \right] k =0 .$$ 
 
\noindent Again substituting the expression for $k$ and using the expression for $c_{n-1}$ we obtain

\begin{equation} \label{contradiction}
\tfrac{n(n-1)}{12} z + c_{n-2} + \tfrac{n-1}{2} b_{n-1} = 0 ,
\end{equation}

\noindent which is a contradiction. Thus our conclusion is that $n=2$, in which case $\mathcal{b}(y) = y^2 - 1$, $c_2 = 0$, $c_1 = 1$ and hence $\mathcal{c}(y) = y$, and we obtain the operator in item 4 of Theorem~\ref{THM commutation} when $\mathcal{p}=0$.

\vspace{.1in}

\noindent \textbf{Case II.2.} Let $\mathcal{a} \neq 0$, then $m \geq 2$. By scaling we let $a_m = 1$. Let us first show that $n \leq m$. For the sake of contradiction assume $n>m$. If also $s>n$, then \eqref{singular comm relations for k} with $j=s-1$ reads $c_s z k =0$, which is a contradiction and therefore $s \leq n$. Now \eqref{singular comm relations for k} with $j=n-1$ reads 

\begin{equation*}
z k' + \left[ 1 + c_n z \right] k = 0 ,
\end{equation*}

\noindent with the convention that $c_n = 0$ if $s<n$. As in the previous case w.l.o.g. we assume $c_n = 0$ so that $k(z) = \frac{1}{z}$. Using these and looking at \eqref{singular comm relations for k} for $j=n-2$ and $j=n-3$ we obtain exactly the same contradiction \eqref{contradiction} as in the previous case (only with a different free constant). 

Thus $n \leq m$, and it is easy to see that also $s \leq m$. The relation \eqref{singular comm relations for k} for $j=m-1$ reads

\begin{equation} \label{sing k m-1}
zk''+ (2+b_m z) k' + (b_m + c_m z) k = 0 ,
\end{equation}

\noindent whose solution is, with $\alpha_1, \alpha_2 \in \CC$

\begin{equation} \label{sing k m-1 sol}
k(z) = \frac{e^{-\frac{b_m}{2}z}}{z} \cdot
\begin{cases}
\alpha_1 \sinh(\mu z) + \alpha_2 \cosh(\mu z), \qquad &\mu^2:=\tfrac{b_m^2}{4} - c_m \neq 0,
\\
\alpha_1 z + \alpha_2, \qquad &\mu=0.
\end{cases}
\end{equation}

\noindent Invoking Remark~\ref{REM multiplier} let us w.l.o.g. assume $b_m = 0$. Then from \eqref{sing k m-1}

\begin{equation} \label{k'' in terms of k' and k}
k''(z) = -\frac{2 k'(z) + c_m z k(z)}{z} .
\end{equation}

\noindent  The relation \eqref{singular comm relations for k} for $j=m-2$ (after dividing it by $m-1$) is

\begin{equation*}
\begin{split}
\left[ a_{m-1} z + \tfrac{m}{2}z^2 \right] k'' +  \left(b_{m-1} z +2 a_{m-1} \right) k'
+  \left[ c_{m-1} z + \tfrac{m}{2} c_m z^2 + b_{m-1} - m \right] k = 0 .
\end{split}
\end{equation*}

\noindent Substituting $k''$ from \eqref{k'' in terms of k' and k} into this equation we obtain

\begin{equation}\label{sing k m-2}
(b_{m-1}-m) z k' + \left[ (c_{m-1} - c_m a_{m-1}) z + b_{m-1}-m \right]k = 0 .
\end{equation}

\noindent Let us now consider the cases for different values of $m$:

\vspace{.1in}
\begin{itemize}
\item[a)]
 let $m=2$, then $\mathcal{a}(y)=y^2-1$, $b_2 = 0$ further the boundary conditions imply $b_1=2$, $b_0 = 0$ and hence $\mathcal{b}(y)=2y$. Then \eqref{sing k m-2} reads $c_1 k=0$, hence $c_1 = 0$ and so $\mathcal{c}(y) = c_2 y^2$. $k(z)$ is determined from \eqref{sing k m-1 sol}, where $\mu^2 = -c_2$. This proves formulas \eqref{k general} and \eqref{a,b,c general} of Theorem~\ref{THM commutation} in the limiting case $\lambda = 0$.

\vspace{.1in}

\item[b)] let $m=3$, then $\mathcal{a}(y) = (y^2-1)(y-\sigma)$ and $b_3 = 0$. In particular we see that $a_2 = -\sigma$ and $a_1 = -1$. From the boundary conditions $b_0 = 2-b_2; \ b_1 = - 2 \sigma = 2a_2$. The relation \eqref{singular comm relations for k} with $j=m-3 = 0$ reads

\begin{equation*}
(z^3 + a_2 z^2 + a_1 z) k'' + (b_2 z^2 + b_1 z + 2a_1) k' + (c_3z^3 + c_2 z^2 + c_1 z) k = 0 .
\end{equation*}

\noindent Substituting $k''$ from \eqref{k'' in terms of k' and k} this simplifies to

\begin{equation*}
(b_2-2) z^2 k' + [(c_2-c_3 a_2)z^2 + (c_1+c_3) z] k = 0 ,
\end{equation*}

\noindent and combining this with \eqref{sing k m-2} we obtain

\begin{equation*}
z k' + \left( c_1+c_3-b_2+3 \right)k = 0 .
\end{equation*}

\noindent But because $k$ has a simple pole at $0$, we must have $c_1+c_3-b_2+3=1$, hence $c_3 = b_2-c_1-2$. Then $k(z) = 1 / z$, substituting this expression into \eqref{sing k m-1} we conclude $c_1 = b_2-2$ and hence $c_3 = 0$. Next we substitute it into \eqref{sing k m-2} to find $c_2 = 0$. Thus 

\begin{equation*}
\begin{cases}
\mathcal{a}(y)=(y^2-1)(y-\sigma) 
\\
\mathcal{b}(y)= b_2 y^2-2\sigma y + 2 - b_2
\\
\mathcal{c}(y)=(b_2 - 2)y 
\end{cases}
\end{equation*}

\noindent This proves item 4 of Theorem~\ref{THM commutation}, when $\beta = b_2 - 3$ and $\mathcal{p}$ is a first order polynomial.

\vspace{.1in}

\item[c)] let $m=4$, then $\mathcal{a}(y) = (y^2-1)(y-\sigma_1)(y-\sigma_2)$, \ $b_4 = 0$. Note that $a_3 = - \sigma_1 - \sigma_2; a_2 = \sigma_1 \sigma_2 - 1$. Further, from the boundary conditions on $\mathcal{b}$ we get $b_1 = 2(a_2+2) -b_3$ and $b_0 = -b_2+2a_3$.  From \eqref{sing k m-1 sol} $k$ has two possible forms, assume first $k(z) = \frac{1}{z} (\alpha_1 z + \alpha_2)$ in which case $c_4 = \frac{b_4^2}{4} = 0$. Since $k$ has a simple pole at the origin $\alpha_2 \neq 0$ and let us normalize $\alpha_2 = 1$. \eqref{sing k m-2} in this case reads $(b_3-4) z k' + (c_3z +b_3-4)k = 0$. Substituting the expression for $k$ into this equation we obtain

$$c_3\alpha_1 z + c_3 + (b_3-4) \alpha_1 = 0 ,$$

\noindent which implies that $c_3 = 0$ and

\begin{equation} \label{2 cases}
\alpha_1 (b_3-4)=0 .
\end{equation}

\noindent The relations \eqref{singular comm relations for k} with $j=m-3$ and $j=m-4$ read respectively as

\begin{equation} \label{sing k m-3}
(4z^3 + 3a_3z^2 + 2a_2 z) k'' + (3b_3z^2 + 2b_2 z + 4a_2) k' + 2 (c_2z - 3a_3 + b_2)k = 0 ,
\end{equation}

\begin{equation} \label{sing k m-4}
(z^4 + a_3z^3 + a_2 z^2 + a_1 z) k'' + (b_3z^3 + b_2 z^2 + b_1 z+2a_1) k' + (c_2z^2 + c_1z - 2a_2 + b_1)k = 0 .
\end{equation}

\noindent Now, \eqref{2 cases} implies that we should consider two cases:

$\bullet$ If $\alpha_1 = 0$, we substitute $k(z) = \frac{1}{z}$ into \eqref{sing k m-3} and find $c_2 =\frac{3}{2} b_3 -4$. Finally substitution into \eqref{sing k m-4} gives 

$$\frac{b_3-4}{2} z + 2a_3 - b_2 + c_1 = 0 ,$$

\noindent therefore $b_3 = 4$ and $c_1 = -2a_3 +b_2$. Putting everything together we obtain

\begin{equation*}
\begin{cases}
\mathcal{a}(y)=(y^2-1)(y-\sigma_1)(y-\sigma_2) 
\\
\mathcal{b}(y)= 4y^3 + b_2y^2 + 2(\sigma_1 \sigma_2 - 1)y -b_2-2(\sigma_1  + \sigma_2)
\\
\mathcal{c}(y)=2y^2 + (b_2 +2 \sigma_1 + 2 \sigma_2)y
\end{cases}
\end{equation*}

\noindent This proves item 4 of Theorem~\ref{THM commutation}, when $\beta = b_2 + 3(\sigma_1 + \sigma_2)$ and $\mathcal{p}$ is a second order polynomial.

$\bullet$ If $\alpha_1 \neq 0$, we get $b_3 = 4$, substituting $k(z) = \alpha_1 + \frac{1}{z}$ into \eqref{sing k m-3} we obtain

$$c_2 \alpha_1 z + (b_2-3a_3)\alpha_1 + c_2 - 2 = 0 ,$$

\noindent hence we deduce $c_2 = 0$ and $\alpha_1 (b_2 -3a_3) = 2$. Finally, we substitute $k$ into \eqref{sing k m-4} and obtain $c_1 = -3a_3  + b_2$ and $a_3 (b_2 -3a_3) = 0$, but because $b_2-3a_3 \neq 0$ we get $a_3=0$, i.e. $\sigma_1 = - \sigma_2$. Then also $\alpha_1 = \frac{2}{b_2}$, \ $\displaystyle k(z) = \frac{2}{b_2} + \frac{1}{z}$ and

\begin{equation*}
\begin{cases}
\mathcal{a}(y)=(y^2-1)(y^2 - \sigma_1^2) ,
\\
\mathcal{b}(y)= 4y^3 + b_2y^2 - 2(\sigma_1^2 + 1)y -b_2,
\\
\mathcal{c}(y)= b_2 y .
\end{cases}
\end{equation*}

\noindent This establishes item 3 of Theorem~\ref{THM commutation} with $\beta = b_2/2$.

Let now $k(z) = \frac{1}{z} (\alpha_1 \sinh(\mu z) + \alpha_2 \cosh(\mu z))$, with $\mu^2 = - c_4 \neq 0$. One can check by subsequent substitutions into \eqref{sing k m-2}, \eqref{sing k m-3} and \eqref{sing k m-4} that this case is impossible.

\vspace{.1in}

\item[d)] Subsequent substitutions show also that $m \geq 5$ is impossible.
\end{itemize}

\medskip

\noindent\textbf{Acknowledgments.}
This material is based upon work supported by the National Science Foundation under Grant No. DMS-1714287.

\section{Appendix}

Here we prove Lemma~\ref{LEMMA polynomial is const}, stating that if the functions $\mathcal{a}, \mathcal{b}, \mathcal{c}$ contain an exponential term, the polynomial multiplying it must be a constant. So let us concentrate on a typical exponential term in $\mathcal{a}, \mathcal{b}$ and $\mathcal{c}$, namely 

\begin{equation*}
\mathcal{a} \leftrightarrow e^{\lambda y} \sum_{j=0}^2 a_j y^j, \qquad
\mathcal{b} \leftrightarrow  e^{\lambda y} \sum_{j=0}^3 b_j y^j, \qquad
\mathcal{c} \leftrightarrow  e^{\lambda y} \sum_{j=0}^3 c_j y^j .
\end{equation*}

\noindent The goal is to show that all the coefficients vanish, except possibly for $a_0, b_0, c_0$. We are going to substitute these expressions into \eqref{R3}. The result becomes a linear combination of terms $y^j e^{\lambda y}$, hence the coefficient of each such terms must vanish. Below we analyze these coefficients, which are in fact ODEs for $k$. 

\vspace{.1in}  

$1.$ First let us show that the polynomials in $\mathcal{b}$ and $\mathcal{c}$ cannot be of higher order, than the polynomial in $\mathcal{a}$, i.e. $b_3 = c_3 = 0$. The equations corresponding to $y^3 e^{\lambda y}$ and $y^2 e^{\lambda y}$ are

\begin{equation} \label{singular k eqs y^3 and y^2}
\begin{split}
 b_3 (e^{\lambda z} - 1)  k' + \left[ b_3 \lambda + c_3 (e^{\lambda z} - 1) \right] k &= 0 ,
\\[1ex]
3(b_3k'+c_3 k) e^{\lambda z} z + (a_2k''+b_2k'+c_2k)e^{\lambda z}
+(2\lambda a_2 -b_2)k'
- a_2k'' -
\\
-[\lambda^2 a_2  -b_2 \lambda +c_2 -3b_3] k &=0 .
\end{split}
\end{equation}

\noindent Assume $b_3 \neq 0$, from the first equation $k(z) = e^{\left( \lambda - \frac{c_3}{b_3} \right) z} / (e^{\lambda z} -1) $. Invoking Remark~\ref{REM multiplier} w.l.o.g. we assume $c_3 = \lambda b_3$ in which case $k(z) = 1/(e^{\lambda z} -1)$. Substitute this into the second equation and multiplying the result by $(e^{\lambda z} -1)^2$ we obtain

\begin{equation*}
(a_2 \lambda^2 - b_2 \lambda +c_2) e^{2\lambda z} + (2b_2 \lambda - 2a_2 \lambda^2 + 3b_3 - 2c_2) e^{\lambda z} - 3b_3 \lambda z e^{\lambda z} + a_2 \lambda^2 - b_2 \lambda +c_2 - 3b_3 = 0 .
\end{equation*}

\noindent The functions $e^{2\lambda z}, e^{\lambda z}, z e^{\lambda z}$ and $1$ are linearly independent, hence the coefficient of each one must vanish. But we see that the coefficient of $z e^{\lambda z}$ is $3b_3 \lambda \neq 0$, which is a contradiction. Thus, $b_3 = 0$ and therefore also $c_3 = 0$.

\vspace{.1in}

2. We now show that $a_2 = 0$. The equations corresponding to $y^2 e^{\lambda y}$ and $y e^{\lambda y}$ are

\begin{equation} \label{singular k eqs y^2 and y}
\begin{split}
a_2 (e^{\lambda z} - 1) k'' + \left[2a_2\lambda + b_2 (e^{\lambda z} - 1) \right] k' + \left[ b_2 \lambda - a_2 \lambda^2 + c_2 (e^{\lambda z} - 1) \right] k &= 0 ,
\\[1ex]
2(a_2 k'' + b_2k'+c_2 k) e^{\lambda z} z + (a_1k''+b_1k'+c_1k)e^{\lambda z}
+(2\lambda a_1 +4a_2-b_1)k' -
\\
- a_1k'' -[\lambda^2 a_1 + (4a_2 -b_1) \lambda +c_1 -2b_2] k &=0 .
\end{split}
\end{equation}

\noindent Assume $a_2 \neq 0$, and by normalization let us assume $a_2=1$.  Solving the first equation we get (as was done in \eqref{singular k formula from exp term})

\begin{equation*}
k(z) = \frac{e^{\left( \lambda -\frac{b_2}{2} \right) z} }{e^{\lambda z} - 1}\cdot
\begin{cases}
\alpha_1 z + \alpha_2, & \mu : = \sqrt{\tfrac{b_2^2}{4} - c_2} = 0
\\
\alpha_1 e^{\mu z} + \alpha_2 e^{-\mu z},  \qquad & \mu \neq 0
\end{cases}
\end{equation*}

\noindent Using Remark~\ref{REM multiplier} let us w.l.o.g. assume $b_2 = 2 \lambda$.

 Let $k$ be given by the first formula. Since $\alpha_2 \neq 0$ we may normalize it to be one, so $k(z) = \frac{\alpha_1 z + \alpha_2 }{e^{\lambda z} - 1}$ and $c_2 = \frac{b_2^2}{4}$. Substituting this expression into the second equation of \eqref{singular k eqs y^2 and y} and multiplying the result by $(e^{\lambda z} - 1)^3$ we obtain

\begin{equation*}
\begin{split}
(p_1z + p_2) e^{3 \lambda z} + \left[ 2 \lambda^2 \alpha_1 z^2 + \left( (2 - 3\alpha_1 a_1) \lambda^2 + (3b_1-8) \alpha_1 \lambda - 3c_1 \alpha_1 \right)z + p_3 \right] e^{2\lambda z} +&
\\
+(p_4 z^2 + p_5 z + p_6) e^{\lambda z} + p_7 z + p_8 &= 0 ,
\end{split}
\end{equation*}

\noindent where $p_j$ are constants depending on $a_1, b_1, c_1, \alpha_1, \lambda$ and their particular expressions are not important. From linear independence the coefficient of $z^2 e^{2\lambda z}$ must vanish, which implies $\alpha_1 = 0$, but then the coefficient of $z e^{2\lambda z}$ becomes $2 \lambda^2 \neq 0$, which leads to a contradiction.

Let $k$ be given by the second formula, then $c_2 = \frac{b_2^2}{4} - \mu^2$ and $\mu \neq 0$. Substituting $k$ into the second equation of \eqref{singular k eqs y^2 and y} and multiplying the result by $e^{\mu z} (e^{\lambda z} - 1)^3$ we obtain

\begin{equation} \label{last}
\begin{split}
\alpha_1 (\mu + \tfrac{\lambda}{2}) z e^{(2\mu + \lambda)z} - \alpha_1 (\mu- \tfrac{\lambda}{2}) z e^{(2\mu + 2\lambda)z} + \alpha_2 (\mu + \tfrac{\lambda}{2}) z e^{2\lambda z} - \alpha_2  (\mu- \tfrac{\lambda}{2}) z  e^{\lambda z}  = 
\\
= q_0 + q_1 e^{\lambda z} + q_2 e^{2\lambda z} + q_3 e^{3\lambda z} + q_4 e^{2\mu z} + q_5 e^{(2\mu + \lambda)z} + q_6 e^{(2\mu + 2\lambda)z} + q_7 e^{(2\mu + 3\lambda)z} ,
\end{split}
\end{equation}

\noindent where $q_j$ are constants whose particular expressions are not important. Note that the functions on LHS of \eqref{last} are linearly independent from the ones on RHS. If all the exponents on LHS are distinct then the coefficients multiplying them must be zero. In particular $\alpha_1 (\mu + \tfrac{\lambda}{2}) = 0$ and $\alpha_1 (\mu- \tfrac{\lambda}{2}) = 0$, which imply $\alpha_1 = 0$. Analogously, $\alpha_2 = 0$ leading to $k=0$. Now assume the exponents on LHS of \eqref{last} are not distinct, then there are two possibilities:

\begin{enumerate}
\item[a)] $2\mu + \lambda = 2\lambda$, hence $\lambda = 2\mu$ and LHS of \eqref{last} becomes $2\mu (\alpha_1 + \alpha_2) z e^{4\mu z}$. Hence $\alpha_1 = -\alpha_2$, which then implies

$$k(z) = \frac{2\alpha_1 \sinh(\mu z)}{e^{\lambda z} -1} .$$

\noindent This contradicts to the assumption that $k$ has a simple pole at the origin.

\item[b)] $2\mu + 2\lambda = \lambda$, hence $\lambda = -2\mu$. Similarly, this case also leads to a contradiction.  

\end{enumerate}

\vspace{.1in}

3. To show $b_2 = c_2 = 0$, we can apply the same argument of 1, because once we established $a_2 = 0$ the equations in \eqref{singular k eqs y^2 and y} are exactly the ones in \eqref{singular k eqs y^3 and y^2}, the only difference is that in the latter we need to replace $b_3, c_3$ by $\frac{2}{3} b_2, \frac{2}{3} c_2$ and $a_2, b_2 ,c_2$ by $a_1, b_1, c_1$ respectively. After this, in an analogous way to 2, we show that $a_1 = 0$, again the equations corresponding to $y e^{\lambda y}$ and $e^{\lambda y}$ are exactly the ones in \eqref{singular k eqs y^2 and y} only $a_2, b_2, c_2$ need to be replaced by $\frac{a_1}{2}, \frac{b_1}{2}, \frac{c_1}{2}$ and $a_1, b_1, c_1$ by $a_0, b_0,c_0$ respectively. Finally, again as in 1, we establish that also $b_1 = c_1 = 0$.

\end{document}